\theoremstyle{plain} 
\newtheorem{lemma}[equation]{Lemma}
\newtheorem{proposition}[equation]{Proposition}
\newtheorem{theorem}[equation]{Theorem}
\theoremstyle{definition}
\newtheorem{definition}[equation]{Definition}
\theoremstyle{remark}
\newtheorem{remark}[equation]{Remark}
\numberwithin{equation}{section}
\def\norm#1.#2.{\lVert#1\rVert_{#2}}
\def\Norm#1.#2.{\bigl\lVert#1\bigr\rVert_{#2}}
\def\NOrm#1.#2.{\Bigl\lVert#1\Bigr\rVert_{#2}}
\def\NORm#1.#2.{\biggl\lVert#1\biggr\rVert_{#2}}
\def\NORM#1.#2.{\Biggl\lVert#1\Biggr\rVert_{#2}}
\def\ip#1,#2,{\langle #1,#2\rangle}
\def\Ip#1,#2,{\bigl\langle#1,#2\bigr\rangle}
\def\IP#1,#2,{\Bigl\langle#1,#2\Bigr\rangle}
\def\Abs#1{\bigl\lvert#1\bigr\rvert}
\def\ABs#1{\biggl\lvert#1\biggr\rvert}
\def\XXint#1#2#3{{\setbox0=\hbox{$#1{#2#3}{\int}$}
     \vcenter{\hbox{$#2#3$}}\kern-.5\wd0}}
\def\diag{\textnormal{diag}}
\def\d{\textnormal {d}}
\begin{document}
\title[The maximal function along a polynomial curve]{Logarithmic Dimension bounds for the maximal function along a polynomial curve}
\subjclass[2000]{Primary: 42B20, 42B25 Secondary: 42B15, 43A15}
\keywords{Maximal function, Polynomial curve, parabolic dilations, semigroup of operators.}

\thanks{Research partially supported by the Wallenberg Foundation and by the Fields Institute.}

\author[Ioannis Parissis]{Ioannis Parissis}
\email{ioannis.parissis@gmail.com}
\address{Institutionen f\"or Matematik,
Kungliga Tekniska H\"ogskolan,
SE 100 44, Stockholm, Sweden.}

\begin{abstract}
Let $\mathcal M$ denote the maximal function along the polynomial curve $(\gamma_1t,\ldots,\gamma_dt^d)$:
 $$\mathcal M(f)(x)=\sup_{r>0}\frac{1}{2r}\int_{|t|\leq r}|f(x_1-\gamma_1 t,\ldots,x_d-\gamma_d t^d)|dt.$$
We show that the $L^2$ norm of this operator grows at most logarithmically with the parameter $d$:
$$\norm \mathcal M f.L^2(\mathbb R^d). \leq c \log d \ \norm f .L^2(\mathbb R^d).,$$
where $c>0$ is an absolute constant. The proof depends on the explicit construction of a ``parabolic'' semi-group of operators which is a mixture of stable semi-groups.
\end{abstract}
\maketitle
\thispagestyle{empty}
\section{Introduction}
For $\Gamma=\diag(\gamma_1,\gamma_2,\ldots,\gamma_d)$ with $\gamma_1,\gamma_2,\ldots,\gamma_d \in \mathbb{R}\setminus\{0\} $, we define the measures $d\mu_r ^\Gamma$ as
\begin{eqnarray*}
\ip \phi,d\mu_r ^\Gamma, = \frac{1}{2r}\int _{|t|\leq r} {\phi(\gamma_1t,\gamma_2t^2,\ldots,\gamma_d t^d)}\ dt, \ \ \phi\in\mathcal{S}(\mathbb R ^d).
\end{eqnarray*}
For $f\in \mathcal S (\mathbb R ^d)$ the maximal function along the polynomial curve $(\gamma_1t,\ldots,\gamma_dt^d)$ is defined as
\begin{eqnarray*}
\mathcal M_{\mu^\Gamma} (f)(x)&=&\sup_{r>0}\frac{1}{2r}\int_{|t|\leq r}|f(x_1-\gamma_1t,x_2-\gamma_2t^2,\ldots,x_d-\gamma_dt^d)|\ dt\\&=&\sup_{r>0}(|f|*d\mu_r ^\Gamma)(x).
\end{eqnarray*}
Our main result gives a control on the norm of this operator in $L^2(\mathbb R ^d)$ in terms of the parameter $d$. In particular we have:
\begin{theorem}\label{t.main} There exists an absolute constant $c>0$ such that for every $f\in L^2(\mathbb R^d)$ we have
\begin{eqnarray*}
\norm \mathcal M _{\mu^\Gamma} f. L^2(\mathbb R^d). \leq c \log d \ \norm f. L^2(\mathbb R^d). .
\end{eqnarray*}
\end{theorem}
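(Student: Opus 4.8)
The plan is to dominate $\mathcal M_{\mu^\Gamma}$ by the maximal operator of a well-chosen symmetric diffusion semigroup --- for which an \emph{absolute} $L^2$ bound is available --- and to control the difference by a Littlewood--Paley square function, the logarithmic loss arising only in the oscillatory-integral bookkeeping at the very end. First I would reduce to the model curve: the diagonal substitution $f\mapsto f\circ\diag(\gamma_1,\dots,\gamma_d)$ is a constant multiple of an $L^2$-isometry and conjugates $\mathcal M_{\mu^\Gamma}$ to $\mathcal M_{\mu^I}$, so one may take $\gamma_k=1$ and work with $(t,t^2,\dots,t^d)$. With the nonisotropic dilations $\delta_r\xi=(r\xi_1,r^2\xi_2,\dots,r^d\xi_d)$ one has $\widehat{d\mu_r}(\xi)=m(\delta_r\xi)$, where $m(\eta)=\tfrac12\int_{-1}^1 e^{-i(\eta_1 t+\eta_2 t^2+\cdots+\eta_d t^d)}\,dt$, equivalently $m(\delta_r\xi)=\tfrac1{2r}\int_{-r}^{r}e^{-iQ(u)}\,du$ with $Q(u)=\sum_{k=1}^d\xi_k u^k$. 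By Plancherel everything reduces to bounds for $r\mapsto m(\delta_r\xi)$ and $\partial_r m(\delta_r\xi)$ that are uniform in $\xi\in\mathbb R^d$.

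Next I would introduce $\{P_s\}_{s>0}$ with $\widehat{P_sf}(\xi)=e^{-sa(\xi)}\,\widehat f(\xi)$, $a(\xi)=\sum_{k=1}^d|\xi_k|^{1/k}$. Each summand $|\xi_k|^{1/k}$ is the L\'evy exponent of a symmetric $1/k$-stable process in the $k$-th variable, so $a$ is negative-definite --- a superposition, i.e.\ a mixture, of stable exponents --- and $\{P_s\}$ is a symmetric diffusion semigroup, the tensor product of one-dimensional stable semigroups; moreover $a(\delta_s\xi)=s\,a(\xi)$, so $P_s$ lives at the same parabolic scale as $d\mu_s$. By Stein's maximal theorem for symmetric diffusion semigroups, $\|\sup_{s>0}|P_sf|\|_{L^2(\mathbb R^d)}\le A\,\|f\|_{L^2(\mathbb R^d)}$ with $A$ absolute; this $d$-independence --- lost if instead one composed $d$ one-dimensional maximal operators --- is exactly what buys the single-semigroup comparison. (If the polynomial decay of $\widehat{d\mu_r}$ demands it, one replaces $e^{-sa(\xi)}$ by a Gamma-type average $\int_0^\infty e^{-s\lambda a(\xi)}\,d\nu(\lambda)$, still a mixture of stable semigroups and still with an absolute $L^2$ maximal bound, since averaging time-dilations of a fixed semigroup cannot enlarge its maximal function.)

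The comparison step: writing $p_s$ for the kernel of $P_s$, I would estimate $\mathcal M_{\mu^I}f\le\sup_{r>0}|P_{cr}f|+\sup_{r>0}|f*(d\mu_r-p_{cr})|$ for a fixed normalizing $c$, the first term being handled above. For the error operator, cut $r$ into dyadic blocks $[2^j,2^{j+1})$, on each use the scale-invariant Sobolev inequality $\sup_{[a,2a]}|g|^2\lesssim a^{-1}\int_a^{2a}|g|^2+a\int_a^{2a}|g'|^2$ in the variable $r$, dominate $\sup_j$ by $(\sum_j(\cdot))^{1/2}$, and apply Plancherel and Fubini. This reduces the theorem to the one-dimensional estimate
\begin{equation*}
\sup_{\xi\in\mathbb R^d}\ \sum_{j\in\mathbb Z}\Bigl(2^{-j}\!\!\int_{2^j}^{2^{j+1}}\!\bigl|m(\delta_r\xi)-\widehat{p_{cr}}(\xi)\bigr|^2\,dr+2^j\!\!\int_{2^j}^{2^{j+1}}\!\bigl|\partial_r m(\delta_r\xi)-\partial_r\widehat{p_{cr}}(\xi)\bigr|^2\,dr\Bigr)\ \lesssim\ (\log d)^2 .
\end{equation*}

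Proving this scalar estimate is where I expect the real work. Fix $\xi$, put $\Lambda=a(\xi)$ and $\mu(r)=\max_k|\xi_k|r^k$, and use the homogeneity $\xi_k\mapsto\lambda^k\xi_k$, $r\mapsto r/\lambda$ of the left-hand side to normalize. On the range $\mu(r)\le1$ (i.e.\ $r\lesssim1/\max_k|\xi_k|^{1/k}$), Taylor-expanding both $m(\delta_r\xi)$ and $\widehat{p_{cr}}(\xi)$ about $1$ yields $|m(\delta_r\xi)-\widehat{p_{cr}}(\xi)|\lesssim\min(1,r\Lambda)$; the scales with $r\Lambda<1$ sum geometrically, while those with $r\Lambda\ge1$ number at most $\log_2\bigl(\Lambda/\max_k|\xi_k|^{1/k}\bigr)\le\log_2 d$ and so contribute $O((\log d)^2)$. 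On the range $\mu(r)>1$ one invokes van der Corput / sublevel-set estimates for $Q$: grouping scales by the dominant monomial degree $k$, and using that in the $k$-th group $|m(\delta_r\xi)|\lesssim\mu(r)^{-\delta_k}$ with decay exponent $\delta_k$ not degrading faster than $1/k$ and with a constant not growing with $d$, each group is summed by a geometric series, and the comparison with $\widehat{p_{cr}}(\xi)$ is used to make the $d$ groups aggregate to $O((\log d)^2)$ rather than $O(d)$. The derivative is treated identically via $\partial_r m(\delta_r\xi)=-r^{-1}m(\delta_r\xi)+\tfrac1{2r}(e^{-iQ(r)}+e^{-iQ(-r)})$, which is $O(1/r)$ pointwise and inherits the decay of $m(\delta_r\xi)$. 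The crux, and the hardest point, is precisely this bookkeeping: one needs van der Corput estimates for degree-$d$ polynomials uniform in the degree, together with a frequency--scale decomposition organized around the $d$ ``corners'' where successive monomials of $Q$ overtake one another, sharp enough that the $d$ corners add up to only a power of $\log d$. Combining the three steps gives $\|\mathcal M_{\mu^\Gamma}f\|_{L^2(\mathbb R^d)}\lesssim\log d\ \|f\|_{L^2(\mathbb R^d)}$, as claimed.
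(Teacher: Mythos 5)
Your architecture is the same as the paper's: reduce to $\Gamma=I$, compare the maximal operator with the maximal function of a semigroup built from stable laws (so that Stein's theorem for symmetric diffusion semigroups gives an absolute $L^2$ bound), and reduce the error term, via a square function and Plancherel, to a pointwise-in-$\xi$ bound on an $\ell^2$-in-scale norm of the multiplier differences, to be proved with Vinogradov/van der Corput estimates. The cosmetic deviations are harmless: keeping $r$ continuous and using a Sobolev embedding in $r$ (the paper instead dominates the ball average by annular averages and passes to the lacunary operator $\sup_k f*d\sigma_{2^k}$), and taking the flat exponent $a(\xi)=\sum_k|\xi_k|^{1/k}$ (the paper uses a block-structured norm $\rho$; your $a$ is indeed negative definite and your low-frequency analysis with it is fine, contributing only $O(\log d)$ scales of size $O(1)$). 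The genuine gap is exactly the step you defer to ``bookkeeping'': why the high-frequency scales $\mu(r)>1$ sum to $O((\log d)^2)$ rather than $O(d)$. Your proposed mechanism does not close it. The sublevel-set bound for a degree-$d$ polynomial gives decay $\mu(r)^{-1/d}$ with an absolute constant, and summing $\mu(r)^{-2/d}$ over dyadic scales gives $O(d)$. Your alternative claim, that when the dominant monomial has degree $k$ one gets $\mu(r)^{-c/k}$ with a constant independent of $d$, is unsubstantiated (dominance of $|\xi_k|r^k$ does not lower-bound $Q^{(k)}$ in the presence of the higher terms, and the constants in $k$-th order van der Corput grow with $k$); and even granting it, $d$ degree-groups each contributing $O(1)$ still totals $O(d)$. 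The appeal to ``the comparison with $\widehat{p_{cr}}$'' cannot rescue this: at scales with $\mu(r)>1$ the Poisson factor is already exponentially small and plays no role.

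What actually resolves this in the paper is a halving induction that your outline lacks. The comparison norm is built from dyadic blocks of coordinates $2^{l-1}<j\le 2^l$, and one proves $c_n\le c_{n-1}+c$ for $c_n=\sup_{\xi\in\mathbb R^{2^n}}\|\widehat{\nu_{2^k}}(\xi)\|_{\ell^2(\mathbb Z)}$ as follows. Set $A^{-1}=\max_{d/2<j\le d}|\xi_j|^{1/j}=|\xi_{j_0}|^{1/j_0}$. On the scales $2^k>A$ the Vinogradov bound gives $|\widehat{\sigma_{2^k}}(\xi)|\lesssim(2^{kj_0}|\xi_{j_0}|)^{-1/d}$, and writing $2^k=A2^m$ the terms are $2^{-mj_0/d}$; the geometric sum is $\lesssim(1-2^{-j_0/d})^{-1}\lesssim d/j_0\lesssim 1$ \emph{precisely because the dominant degree satisfies $j_0>d/2$}. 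On the scales $2^k\le A$ every top-half monomial satisfies $2^{kj}|\xi_j|\le(2^k/A)^j\le1$, so one may replace $\xi$ by its truncation $y$ to the first $d/2$ coordinates at a cost of $O(1)$ in $\ell^2$ (here the block structure of $\rho$ is used to make $|\rho(\xi)-\rho(y)|\lesssim A^{-1}$, which your flat $a$ does not satisfy with a constant independent of $d$), and then invoke the inductive hypothesis for $y\in\mathbb R^{d/2}$. The $\log d$ in the theorem is exactly the number of halvings. Without this induction, or an equivalent device for aggregating the $d$ ``corners'', your outline yields at best a power of $d$.
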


The method we use to prove Theorem \ref{t.main} is mainly inspired by Bourgain's work on the dimension free bounds for the maximal function associated with a convex body. In \cite{B} for example, Bourgain compares the characteristic function of the convex body with the Poisson semi-group, the latter being controlled by Stein's general maximal theorem for symmetric diffusion semi-groups. This approach also appears for example in \cite{SS} where the authors use the Heat semi-group instead, and in \cite{B1} and \cite{Car}. However, these semi-groups, adopted to the Euclidean isotropic structure, are not compatible with the parabolic dilations we are considering. The appropriate Poisson kernel for the space of homogeneous type under study is a mixture of stable semi-groups. In an abstract setting (homogeneous groups, symmetric spaces) the existence of such semi-groups is well known. See for example \cite{S1}, \cite{S2}     and \cite{SF}. In this paper we construct such a Poisson kernel explicitly, our starting point being essentially the desired properties of its Fourier transform.

The paper is organized as follows. In Section \ref{s.dil} we introduce the notion of parabolic dilations and we define the associated norm that will accompany us throughout the paper. The definition of this norm might seem a bit unmotivated at that point but its usefulness will become clear later on. In Section \ref{s.parmax} we look into more detail at our main object of study, the maximal operator along a polynomial curve. We explain how the problem reduces to studying the corresponding square function and a ``parabolic`` semigroup of operators compatible with the parabolic dilations. This semigroup is also discussed in this section along with its main properties. The proof of the main theorem is concluded in Section \ref{s.square} where the necessary oscillatory integral estimates are also stated and proven.

\section{Notations} Throughout the paper $c$ will denote a numerical positive constant which might change even in the same line of text. We will many times suppress numerical constants by using the symbol $\lesssim$. Thus $A\lesssim B$ means that $A\leq c B$ for $c$ as described. We will never suppress constants that depend on $d$.

Since we are dealing with positive operators, we will always assume that the symbol $f$ stands for a non negative function. We will use this assumption without any further comment in what follows.

Finally, for every Lebesgue measurable set $K\subset \mathbb R^d$ we will write $|K|$ for its Lebesgue measure.
\section{Parabolic Dilations} \label{s.dil}

We will work on the Euclidean space $\mathbb R^d$ endowed with the family of dilations
\begin{eqnarray}\label{e.dilations}
\delta_s x  =(sx_1,s^2x_2,\ldots,s^d x_d), \ \ x\in\mathbb{R}^d, s>0.
\end{eqnarray}
We will call $\delta_s$ the \emph{parabolic dilations operator}. We will now define a norm function that is homogeneous with respect to the dilations \eqref{e.dilations} in the following way. We fix a positive integer $n$ such that $2^{n-1}<d \leq 2^n$ and write
\begin{eqnarray}\label{e.metric}
\rho(x)=\sum_{0\leq l\leq n-1} \bigg(\sum_{2^{l-1}<j\leq 2^l} |x_j|^\frac{2^l}{j}\bigg)^\frac{1}{2^l}+\bigg(\sum_{2^{n-1}<j\leq d} |x_j|^\frac{2^n}{j}\bigg)^\frac{1}{2^n}.
\end{eqnarray}
The following Proposition contains the basic properties of the norm function $\rho$:

\begin{proposition}\label{p.metric} Let $\rho:\mathbb R^d\rightarrow [0,\infty)$ be the function defined in \eqref{e.metric}. Then $\rho$ satisfies the following properties for every $x,y\in \mathbb R ^d$:
\begin{enumerate}
\item[(i)]{$\rho(x)=0\Leftrightarrow x=0$}.
\item[(ii)]{For $s>0$, $\rho(\delta_s x)=s \rho(x)$.}
\item[(iii)]{$\rho(-x)=\rho(x)$.}
\item[(iv)]{$\rho(x+y)\lesssim \rho(x)+\rho(y)$.}
\item[(v)]{The function $\rho$ is continuous in $\mathbb R^d$.}
\end{enumerate}
For $x,y\in\mathbb R^d$ we can thus define $\rho^*(x,y)\coloneqq \rho(x-y)$ and $\rho^*$ is a translation invariant metric in $\mathbb R ^d$.
\end{proposition}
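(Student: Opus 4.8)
The plan is to check the five enumerated properties directly from the defining formula \eqref{e.metric}, treating $\rho$ as a finite sum of ``mixed-norm'' blocks, and then to read off the statement about $\rho^*$ from items (i), (iii) and (iv).

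Properties (i), (ii), (iii) are essentially immediate. For (i), every summand in \eqref{e.metric} is a sum of nonnegative terms $|x_j|^{2^l/j}$ with positive exponent $2^l/j>0$, so $\rho(x)=0$ forces each $x_j=0$, and the converse is clear. For (iii), only the moduli $|x_j|$ appear. For (ii), I would use that the $j$-th coordinate of $\delta_s x$ is $s^j x_j$, so that $|(\delta_s x)_j|^{2^l/j}=s^{2^l}|x_j|^{2^l/j}$; pulling the common factor $s^{2^l}$ out of an inner sum over a block $2^{l-1}<j\le 2^l$ and then raising to the power $1/2^l$ turns that block into $s$ times its value at $x$, and the same happens for the final block with $2^n$ in place of $2^l$, so that $\rho(\delta_s x)=s\rho(x)$. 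For (v), I would note that each exponent occurring is $2^l/j\in[1,2)$ (respectively $2^n/j\in[1,2)$), so that $t\mapsto|t|^{2^l/j}$ is continuous on $\mathbb R$, the inner sums are continuous, and $t\mapsto t^{1/2^l}$ (respectively $t\mapsto t^{1/2^n}$) is continuous on $[0,\infty)$; hence $\rho$, a finite sum of compositions of continuous maps, is continuous.

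The only part requiring genuine work is the quasi-triangle inequality (iv), and this is the step I expect to be the main (though still mild) obstacle. I would establish it one block at a time. Fix the block indexed by some $l\le n-1$, set $p=2^l\ge1$ and $q_j=2^l/j$ for $2^{l-1}<j\le 2^l$, so that $1\le q_j<2$, and write $B_l(z)=\bigl(\sum_{2^{l-1}<j\le 2^l}|z_j|^{q_j}\bigr)^{1/p}$ for the corresponding term of \eqref{e.metric}. Using the scalar triangle inequality in each coordinate together with the convexity of $t\mapsto t^{q_j}$, one gets $|x_j+y_j|^{q_j}\le(|x_j|+|y_j|)^{q_j}\le 2^{q_j-1}\bigl(|x_j|^{q_j}+|y_j|^{q_j}\bigr)\le 2\bigl(|x_j|^{q_j}+|y_j|^{q_j}\bigr)$; summing over the block yields $B_l(x+y)^p\le 2\bigl(B_l(x)^p+B_l(y)^p\bigr)$, and taking $p$-th roots together with $(a^p+b^p)^{1/p}\le a+b$ for $p\ge1$ gives $B_l(x+y)\le 2^{1/p}\bigl(B_l(x)+B_l(y)\bigr)\le 2\bigl(B_l(x)+B_l(y)\bigr)$. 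The identical estimate holds for the last block with $2^n$ replacing $2^l$. Summing over all blocks then produces $\rho(x+y)\le 2\bigl(\rho(x)+\rho(y)\bigr)$, which is (iv) with explicit constant $2$. (Alternatively, one could argue abstractly: since $\rho(x)\ge\max_{1\le j\le d}|x_j|^{1/j}\to\infty$ as $|x|\to\infty$, property (v) makes the ``unit ball'' $\{\rho\le1\}$ compact with $0$ in its interior, and a standard normalization by the dilations $\delta_s$ then gives a quasi-triangle inequality.)

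Finally, set $\rho^*(x,y)=\rho(x-y)$. Then $\rho^*\ge0$ with equality precisely when $x=y$ by (i); $\rho^*(x,y)=\rho^*(y,x)$ by (iii); $\rho^*$ is translation invariant since $(x+z)-(y+z)=x-y$; and applying (iv) to the splitting $x-y=(x-z)+(z-y)$ gives $\rho^*(x,y)\lesssim\rho^*(x,z)+\rho^*(z,y)$. Thus $\rho^*$ is a translation-invariant (quasi-)metric on $\mathbb R^d$, making $(\mathbb R^d,\rho^*)$ a space of homogeneous type, and its topology coincides with the Euclidean one by (v).
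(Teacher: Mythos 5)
Your proof is correct. The paper states this proposition without proof (it is treated as routine), so there is no argument to compare against; your block-by-block verification is exactly the standard one, and the two observations that make it work are correctly identified: every inner exponent $2^l/j$ lies in $[1,2)$ (so convexity gives $|x_j+y_j|^{q_j}\le 2(|x_j|^{q_j}+|y_j|^{q_j})$ with an absolute constant) and every outer exponent $1/2^l$ is at most $1$ (so $(a^p+b^p)^{1/p}\le a+b$), yielding the quasi-triangle inequality with the dimension-free constant $2$ — which matters here, since the paper insists that implied constants never depend on $d$. Your closing remark that (iv) only makes $\rho^*$ a quasi-metric (rather than a metric in the strict sense, as the statement loosely says) is an accurate reading of what is actually proved and used.
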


Set $\alpha=1+2+\cdots+\d$. We usually refer to $\alpha$ as the \emph{homogeneous dimension} of $\mathbb R^ d$ since $\mathbb{R}^d$ endowed with a parabolic norm $\rho$ and the usual Lebesgue measure is a space of homogeneous type in the sense of Coifman and Weiss \cite{CW}. Since the ``balls'' $\{x\in \mathbb{R}^d: \rho(x)< r\}$ have measure of the order $r^\alpha$, $\alpha$ is the homogeneous dimension of the space.

We introduce now a system of polar coordinates which is appropriate in the context of parabolic dilations. For $x\in\mathbb{R}^d\setminus\{0\}$ we consider
\begin{eqnarray*}
x^\prime = \delta _{\rho(x)^{-1}} x \in S^\rho,
\end{eqnarray*}
where $S^\rho$ is the ``unit sphere'' corresponding to the parabolic norm $\rho$:
\begin{eqnarray*}
S^\rho=\{x\in\mathbb R ^d: \rho(x)=1\}.
\end{eqnarray*}

The polar coordinates are then defined by the mapping
\begin{eqnarray*}
\mathbb R ^d\setminus \{0\}\ni x \mapsto (x^\prime, \rho(x))\in S^\rho \times (0,\infty).
\end{eqnarray*}
We have the following lemma which is classical in this area.
\begin{lemma}\label{l.polar} There is a unique Radon measure $\sigma_\rho$ on $S^\rho$ such that for all $\phi\in L^1(\mathbb R ^d)$
\begin{eqnarray*}
\int_{\mathbb R^d}\phi(x)dx=\int_0 ^\infty \int_{S^\rho} \phi(\delta_rx^\prime)r^{\alpha-1} d\sigma_\rho(x^\prime) dr.
\end{eqnarray*}
\end{lemma}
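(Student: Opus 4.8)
The plan is to reduce the formula to the scaling behaviour of Lebesgue measure under the parabolic dilations, and then bootstrap from indicator functions of ``parabolic annuli'' to all of $L^1(\mathbb R^d)$. First I would record that the map $\Phi\colon S^\rho\times(0,\infty)\to\mathbb R^d\setminus\{0\}$, $\Phi(x',r)=\delta_r x'$, is a homeomorphism with inverse $x\mapsto(x',\rho(x))$; this is immediate from parts (i), (ii) and (v) of Proposition \ref{p.metric}, and along the way one checks that $S^\rho$ is compact — it is closed as $\rho^{-1}(\{1\})$, and bounded since $\rho(x)=1$ forces $|x_j|\le 1$ for every $j$. In particular $\Phi$ and $\Phi^{-1}$ are Borel measurable, so for a Borel set $E\subseteq S^\rho$ and $0\le a<b$ the annulus $A(E;a,b)\coloneqq\{\delta_r x':x'\in E,\ a<r<b\}$ is a Borel subset of $\mathbb R^d$.

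Next I would define the measure: for Borel $E\subseteq S^\rho$ put $\sigma_\rho(E)\coloneqq\alpha\,|A(E;0,1)|$. Countable additivity of $\sigma_\rho$ follows from that of Lebesgue measure, since the annuli over disjoint sets $E$ are disjoint, and $\sigma_\rho(S^\rho)=\alpha\,|\{\rho<1\}|<\infty$, so $\sigma_\rho$ is a finite Radon measure on the compact metric space $S^\rho$. The key computation is that the linear map $\delta_s$ has Jacobian $s^{1+2+\cdots+\d}=s^\alpha$, whence $|A(E;0,b)|=b^\alpha|A(E;0,1)|$ and therefore
$$\bigl|A(E;a,b)\bigr|=\frac{b^\alpha-a^\alpha}{\alpha}\,\sigma_\rho(E)=\int_a^b r^{\alpha-1}\,dr\int_{S^\rho}\mathbf{1}_E(x')\,d\sigma_\rho(x'),$$
which is exactly the asserted identity for $\phi=\mathbf{1}_{A(E;a,b)}$.

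Finally I would extend and prove uniqueness. The finite disjoint unions of sets $A(E;a,b)$ form an algebra generating the Borel $\sigma$-algebra of $\mathbb R^d\setminus\{0\}$ (and $\{0\}$ is Lebesgue-null), so by linearity and the monotone convergence theorem the identity passes first to nonnegative Borel simple functions, then to all nonnegative Borel $\phi$, and then to general $\phi\in L^1(\mathbb R^d)$ by splitting into real and imaginary, positive and negative parts, all the iterated integrals being finite by Tonelli. For uniqueness, if $\tilde\sigma$ is another Radon measure on $S^\rho$ satisfying the formula, then testing both $\tilde\sigma$ and $\sigma_\rho$ against $\phi=\mathbf{1}_{A(E;0,1)}$ gives $\tfrac1\alpha\,\tilde\sigma(E)=|A(E;0,1)|=\tfrac1\alpha\,\sigma_\rho(E)$ for every Borel $E\subseteq S^\rho$, hence $\tilde\sigma=\sigma_\rho$. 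There is no deep difficulty here — the content is just the homogeneity of Lebesgue measure with exponent $\alpha$ — so the only step requiring genuine care is the measure-theoretic plumbing: that $\Phi$ is a Borel isomorphism, that $\sigma_\rho$ is a well-defined Radon measure, and that the class of parabolic annuli is rich enough to run the monotone class argument.
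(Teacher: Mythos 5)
Your argument is correct. The paper itself gives no proof of Lemma \ref{l.polar} and simply refers to Folland--Stein \cite{SF}, and what you have written is precisely the standard argument given there: define $\sigma_\rho(E)=\alpha\,|\{\delta_r x': x'\in E,\ 0<r<1\}|$, use that $\delta_s$ has Jacobian $s^\alpha$ to get the identity on parabolic annuli, and extend by a monotone class argument; uniqueness by testing on $\mathbf{1}_{A(E;0,1)}$ is also the standard route. The only cosmetic quibble is that the sets $A(E;a,b)$ with open intervals form a $\pi$-system rather than (disjoint unions of) an algebra, but Dynkin's lemma or the uniqueness of $\sigma$-finite product measures closes that gap immediately.
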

For the proof of this lemma see for example \cite{SF}.

We introduce the (parabolic) dilations of a function $K\in L^1(\mathbb{R}^d)$. We will consistently use the notation $K_s$ to denote the anisotropic dilations of the function $K$:
\begin{eqnarray*}
K_s(x)=\frac{1}{s^\alpha}K(\delta_s ^{-1}x), \ \ s>0.
\end{eqnarray*}
Of course we have $\widehat {K_s} (\xi) = \hat K(\delta_s \xi)$ for $s>0$ and $\xi\in  \mathbb{R}^d$. In particular $\int_{\mathbb{R}^d} K_s =\int_{\mathbb{R}^d} K$ for any $s>0$.

Likewise, if $\mu$ is a finite Borel measure we can define the parabolic dilation of $\mu$ to be the measure $\mu_s$ acting on test functions $\phi\in\mathcal S (\mathbb R ^d)$ as:
\begin{eqnarray*}
\ip \phi , \mu_s, =\int \phi(\delta_s x)d\mu(x), \ s>0.
\end{eqnarray*}
Equivalently we can define $\mu_s$ by the relation
\begin{eqnarray*}
\widehat {\mu_s}(\xi) =\hat\mu(\delta_s\xi), \ \ s>0, \ \ \xi\in\mathbb R ^d.
\end{eqnarray*}
We stress that $K_r$ and $d\mu_r$ always denote the \textbf{parabolic} dilations of a function or a measure
respectively (not to be confused with the standard dilations of $\mathbb R^d$ many times written with the same
notation). \label{p.pardil}
\section{The maximal function along a polynomial curve and a related semigroup of operators.}\label{s.parmax}

\subsection{The maximal function along a polynomial curve}
We are interested in the following model-case operator that controls differentiation along a polynomial curve. For  $\Gamma=\diag(\gamma_1,\gamma_2,\ldots,\gamma_d)$ with $\gamma_1,\gamma_2,\ldots,\gamma_d \in \mathbb{R}\setminus\{0\} $, we define the measure $d\mu^\Gamma$ as
\begin{eqnarray*}
\ip \phi,d\mu^\Gamma, = \frac{1}{2}\int _{|t|\leq 1} {\phi(\gamma_1t,\gamma_2t^2,\ldots,\gamma_dt^d)}\ dt, \ \ \phi\in\mathcal{S}(\mathbb R ^d).
\end{eqnarray*}
For $f\in \mathcal S(\mathbb R^d)$ the maximal function along the polynomial curve $(\gamma_1t,\ldots,\gamma_dt^d)$ is defined as
\begin{eqnarray}
\mathcal M_{\mu^\Gamma}(f)(x)&=&\sup_{r>0}(f*d\mu^\Gamma _r)(x)\\&=&\sup_{r>0}\frac{1}{2r}\int_{|t|\leq r}f(x_1-\gamma_1t,x_2-\gamma_2t^2,\ldots,x_d-\gamma_dt^d)\ dt.
\end{eqnarray}

The operator $\mathcal M_{\mu^\Gamma}$ has been well studied. In fact it is well known that this operator is bounded on $L^p(\mathbb R^d)$, $1<p\leq \infty$ and the norm of the operator depends only on the parameter $d$. For further details about the $L^p$ bounds we refer the reader to \cite{SW}. An alternative proof of the $L^p$ bounds is contained in \cite{DR}. A standard reference that contains these results as well as corresponding results for more general curves is the book of Stein \cite{S}. Finally we note that behavior of this operator in $L^1(\mathbb R ^d)$ is not very well understood. The question whether this operator is bounded from $L^1(\mathbb R^d)$ to weak $L^1(\mathbb R^d)$ is open. For results ``close'' to $L^1$ we refer the interested reader to \cite{C}, \cite {CS} and \cite{ST}. The purpose of this note is to study the dependence of the $L^2$ bounds of this operator on the parameter $d$.

We first note some easy reductions. One immediately makes the observation that it is enough (in terms of $L^p$-boundedness) to take $\Gamma=I$ and study the operator
\begin{eqnarray}
\mathcal{M_\mu}(f)(x)=\sup_{r>0}(f*d\mu _r)(x)=\sup_{r>0}\frac{1}{2r}\int_{|t|\leq r}f(x_1-t,x_2-t^2,\ldots,x_d-t^d)\ dt,
\end{eqnarray}
where $\mu=\mu^I$. Indeed we have that
\begin{eqnarray*}
\mathcal{M}_{\mu^\Gamma}(f)=\mathcal{M}_\mu(f\circ \Gamma)\circ \Gamma^{-1},
\end{eqnarray*}
and so $\norm \mathcal {M}_{\mu^\Gamma}.p\rightarrow p. = \norm \mathcal {M}_\mu .p\rightarrow p.$ for any $1<p\leq \infty$.

A further reduction can be made by observing that the operator $\mathcal M_\mu$ is a lacunary maximal operator in disguise. To see this we define the probability measure
\begin{eqnarray}\label{e.sigma}
\hat\sigma(\xi) = \int _{\frac{1}{2}<|t|\leq 1} e^{-2\pi i (\xi_1t+\cdots+\xi_d t^d)}dt,
\end{eqnarray}
and for $k\in \mathbb Z$ we consider the -$2^k$- parabolic dilations of $\sigma$:
\begin{eqnarray*}
\widehat{\sigma_{2^k}}(\xi) = \hat\sigma(\delta_{2^k}\xi)=\frac{1}{2^k}\int _{2^{k-1}<|t|\leq 2^k} e^{-2\pi i (\xi_1t+\cdots+\xi_d t^d)}dt.
\end{eqnarray*}
Now we fix some $r>0$ and write $2^{k_o-1}<r\leq 2^{k_o}$. We have
\begin{eqnarray*}
\frac{1}{2r}\int_{|t|\leq r}f(x_1-t,x_2-t^2,\ldots,x_d-t^d)\ dt&\leq&
\frac{1}{2^{k_o}} \sum_{k=-\infty} ^{k_o} 2^k f*d\sigma_{2^k}(x)\leq 2 \sup_{k\in\mathbb Z} (f*d\sigma_{2^k})(x).
\end{eqnarray*}
Thus, if we define $\mathcal M_ \sigma ^d(f)(x)\coloneqq\sup_{k\in\mathbb Z} (f*d\sigma_{2^k})(x)$, we have
\begin{eqnarray*}
\mathcal M_\sigma ^d(f)(x)\leq \mathcal{M}_\mu(f)(x)\leq 2 \mathcal M_\sigma ^d(f)(x).
\end{eqnarray*}
This means that it is enough to study the dyadic maximal operator $\mathcal M^d _\sigma$ on $L^2(\mathbb R^d)$.

\subsection{A related semigroup of operators}

Let us for a moment consider a general maximal operator of the form
\begin{eqnarray*}
\mathcal M^d _\lambda(f)(x)=\sup_{k\in \mathbb Z} (f*d\lambda_{2^k})(x), \ \ f\in S(\mathbb R ^d),
\end{eqnarray*}
where $d\lambda$ is some probability measure on $\mathbb R ^d$. As usual, $d\lambda_t$ denotes the parabolic dilations of the measure $d\lambda$. We would like to replace the supremum in the maximal function above by an appropriate square function, that is, write down an estimate of the form
\begin{eqnarray}\label{e.toy}
\mathcal M^d _\lambda(f)(x)\leq \big(\sum_{k\in \mathbb Z} (f*d\lambda_{2^k})^2(x)\big)^\frac{1}{2}.
\end{eqnarray}
Suppose that $\hat \lambda$ has some decay at infinity, and this will indeed be the case in the problem we are interested in, so that the above sum makes sense  at infinity. Of course $\hat \lambda(0)=1$ since $d\lambda$ is a probability measure so we can't expect anything like that close to $0$. In order to make the Fourier transform of the measure small close to zero we replace \eqref{e.toy} by the estimate
\begin{eqnarray}\label{e.notoy}
\mathcal M^d _\lambda(f)(x)\leq \sup_{k\in \mathbb Z}(f*d\eta_{2^k})(x)+ \big(\sum_{k\in \mathbb Z} |f*(d\lambda-d\eta)_{2^k}|^2(x)\big)^\frac{1}{2}.
\end{eqnarray}
for some suitable probability measure $d\eta$. So, in order to make this estimate useful $\hat\eta$ has be chosen so that it decays fast enough at infinity. Of course this is a very old trick used to estimate maximal functions. Since we are interested in the operator norms that appear in \ref{e.notoy}, the choice of the measure $d\eta$ must take that into account. Following Bourgain from \cite{B} we will define an appropriate measure $d\eta$ so that the dilations $d\eta_t$ give rise to a symmetric diffusion semigroup of operators. Then by an appeal to Stein's general maximal theorem for symmetric diffusion semi-groups \cite{S1}, the maximal function associated with the measure $d\eta$ is bounded on all $L^p$ spaces with constants that do not depend on the dimension.

The following lemma defines the appropriate ``Poisson Kernel'' for the space of homogeneous type we are considering. Remember that $\rho$ is the parabolic norm defined by \eqref{e.metric}.

\begin{lemma} \label{l.core} There exists a probability measure $dP^\rho$ on $\mathbb R^d$ such that $\widehat {dP^\rho}(\xi)=e^{-\rho(\xi)}$.
\end{lemma}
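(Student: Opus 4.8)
The plan is to combine the additive structure of $\rho$ with two successive applications of subordination. First decompose the index set $\{1,\dots,d\}$ into the disjoint blocks $B_l=\{j:2^{l-1}<j\le 2^l\}$ for $0\le l\le n-1$ together with $B_n=\{j:2^{n-1}<j\le d\}$, and set $p_l=2^l$ for $l\le n-1$ and $p_n=2^n$. By the very definition \eqref{e.metric} we have $\rho(\xi)=\sum_{l=0}^n\rho_l(\xi_{B_l})$, where $\rho_l$ depends only on the variables indexed by $B_l$,
\begin{equation*}
\rho_l(\xi_{B_l})=\Bigl(\sum_{j\in B_l}|\xi_j|^{p_l/j}\Bigr)^{1/p_l},
\end{equation*}
so that $e^{-\rho(\xi)}=\prod_{l=0}^n e^{-\rho_l(\xi_{B_l})}$. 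Since the Fourier transform of a product probability measure $\bigotimes_l\mu_l$ on $\mathbb R^d=\prod_l\mathbb R^{B_l}$ is the product of the Fourier transforms, it suffices to construct, for each $l$, a probability measure on the coordinate subspace $\mathbb R^{B_l}$ with Fourier transform $e^{-\rho_l}$.

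Fix a block $B=B_l$ and abbreviate $p=p_l$, $q_j=p/j$. The dyadic grouping is engineered precisely so that $1\le q_j\le 2$ for every $j\in B$ — this is the one place where the exact shape of \eqref{e.metric} is used. Consequently, for each fixed $u>0$ and each $j\in B$, the one-variable function $\eta\mapsto e^{-u|\eta|^{q_j}}$ is the characteristic function of a symmetric $q_j$-stable probability measure on $\mathbb R$ (a dilated Gaussian when $q_j=2$, a Cauchy law when $q_j=1$, a Lévy stable law in between). Taking the product over $j\in B$, the function $\xi_B\mapsto e^{-u\,\varphi_B(\xi_B)}$ with $\varphi_B(\xi_B)\coloneqq\sum_{j\in B}|\xi_j|^{q_j}$ is the Fourier transform of a product probability measure $\lambda^B_u$ on $\mathbb R^{B}$.

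To recover $e^{-\rho_l}=e^{-\varphi_B^{1/p}}$ we must undo the outer exponent $1/p$. If $p=1$ (the block $B_0=\{1\}$) there is nothing to do. If $p\ge 2$, set $\beta=1/p\in(0,1)$ and let $g_\beta$ be the probability density on $(0,\infty)$ of the one-sided $\beta$-stable subordinator, characterized classically by $\int_0^\infty e^{-\lambda u}\,g_\beta(u)\,du=e^{-\lambda^\beta}$ for all $\lambda\ge 0$. Applying this identity with $\lambda=\varphi_B(\xi_B)\ge 0$ gives
\begin{equation*}
e^{-\rho_l(\xi_B)}=\int_0^\infty e^{-u\,\varphi_B(\xi_B)}\,g_\beta(u)\,du=\int_0^\infty \widehat{\lambda^B_u}(\xi_B)\,g_\beta(u)\,du,
\end{equation*}
so $e^{-\rho_l}$ is the Fourier transform of the mixture $P_l\coloneqq\int_0^\infty\lambda^B_u\,g_\beta(u)\,du$, a Borel probability measure on $\mathbb R^B$ once one checks that $u\mapsto\lambda^B_u$ is weakly measurable (immediate from its Fourier transform) and that $g_\beta(u)\,du$ has total mass $1$. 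The desired measure is then $dP^\rho=\bigotimes_{l=0}^n P_l$.

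There is no serious obstacle here; the lemma is essentially the observation that $e^{-\rho}$ factors over disjoint coordinate blocks and that each block factor is a subordinated product of stable kernels. The substantive inputs — existence of the symmetric $q$-stable laws for $0<q\le 2$ and of the one-sided $\beta$-stable subordinators for $0<\beta<1$ — are classical. The only genuinely necessary verifications are that $p_l/j\in[1,2]$ for all $j$ in each block (immediate from $2^{l-1}<j\le 2^l$, and from $2^{n-1}<j\le d\le 2^n$ for the last block) and the routine measurability that legitimizes the mixture $\int_0^\infty\lambda^B_u g_\beta(u)\,du$ as a measure.
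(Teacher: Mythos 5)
Your construction is correct. The block decomposition of $\rho$ from \eqref{e.metric} and the first step --- tensoring symmetric $q_j$-stable laws, which exist precisely because the dyadic grouping forces $q_j=p_l/j\in[1,2]$ --- coincide exactly with the paper's proof of Lemma \ref{l.core}. Where you diverge is in how the outer exponent $1/p_l$ is handled. The paper stays at the level of Fourier transforms: it uses the parabolic dilations to show $e^{-t\psi^{(l)}}$ is positive definite for all $t>0$, invokes Schoenberg's theorem (Theorem \ref{t.schoen}) to conclude $\psi^{(l)}$ is negative definite, applies Lemma \ref{l.power} (the elementary $\Gamma$-function identity \eqref{e.subord}, i.e.\ the fact that $s\mapsto s^\gamma$ is a Bernstein function) to get that $(\psi^{(l)})^{1/2^l}$ is negative definite, and then gets the measure $\nu^{(l)}$ non-constructively from Bochner's theorem. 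You instead subordinate on the measure side, writing $P_l=\int_0^\infty\lambda^B_u\,g_\beta(u)\,du$ with $g_\beta$ the one-sided $\beta$-stable density characterized by $\int_0^\infty e^{-\lambda u}g_\beta(u)\,du=e^{-\lambda^\beta}$. The two routes are dual formulations of the same subordination: yours produces the measure explicitly as a mixture of product stable laws (which is arguably closer in spirit to the ``mixture of stable semi-groups'' advertised in the introduction, and makes positivity and total mass $1$ transparent without Bochner), at the cost of importing the existence of the subordinator density $g_\beta$, a classical fact (Bernstein's theorem on completely monotone functions) that is of comparable depth to, but not literally the same as, the input Theorem \ref{t.char} the paper cites. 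Both arguments are complete; no gap.
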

The proof of this lemma is not difficult but it relies on several classical results from Probability theory. First, we recall the notions of positive definite and negative definite functions.

\begin{definition}[Positive Definite Functions] A function $\psi:\mathbb R^d\rightarrow \mathbb C$ is called positive definite if for any choice of $m\in\mathbb N$ and any choice of vectors $x_1,\ldots,x_m\in \mathbb R^d$ and complex numbers $\lambda_1,\ldots,\lambda_m$
\begin{eqnarray*} \sum_{j=1} ^{m}\sum_{k=1} ^{m} f(x_j-x_k)\lambda_j \bar\lambda_k \geq 0.
\end{eqnarray*}
\end{definition}

The notion of positive definite functions is closely related to the notion of negative definite functions.

\begin{definition}\label{d.ndef}[Negative Definite Functions] Suppose that $\psi:\mathbb R^d\rightarrow \mathbb C$ is a measurable function. Then $\psi$ is called negative definite if it satisfies the following properties
\begin{enumerate}
\item[(i)] $\psi(0)\geq 0$.
\item[(ii)] The function $\psi$ is Hermitian: $\psi(-x)=\overline{\psi(x)}$ for every $x\in\mathbb R^d$.
\item[(iii)] For any $m\in \mathbb N$ and any choice of vectors $x_1,\ldots,x_m\in\mathbb R^d$ and complex numbers $\lambda_1,\ldots,\lambda_m$, $\sum_{j=1} ^m \lambda_j=0$ implies that
\begin{eqnarray*} \sum_{j=1} ^{m}\sum_{k=1} ^{m} f(x_j-x_k)\lambda_j \bar\lambda_k \leq 0.
\end{eqnarray*}
\end{enumerate}
\end{definition}

The relation between positive definite and negative definite functions is the content of \emph{Schoenberg's} theorem:

\begin{theorem}\label{t.schoen} A function $\psi:\mathbb R^d\rightarrow \mathbb C$ is (continuous and) negative definite if and only if $\psi(0)\geq0$ and $e^{-t\psi(x)}$ is (continuous and) positive definite for every $t\geq0$.\footnote{In the literature many times this is the actual definition of negative definite functions and our definition is another characterization. Note also that in the literature our notion of negative definite functions is many times refered to as ``conditionally negative definite functions''.}
\end{theorem}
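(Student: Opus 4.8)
The plan is to establish the two implications separately. The easy one is that $\psi(0)\ge0$ together with positive-definiteness of $e^{-t\psi}$ for all $t\ge0$ forces $\psi$ to satisfy (i)--(iii) of Definition \ref{d.ndef}. Indeed, any function $\phi$ obeying $\sum_{j,k}\phi(x_j-x_k)\lambda_j\bar\lambda_k\ge0$ for every finite configuration is automatically Hermitian (apply this with $m=2$, $x_1=0$, $x_2=x$), so $e^{-t\psi(-x)}=\overline{e^{-t\psi(x)}}$ for all $t\ge0$; differentiating in $t$ at $t=0$ gives $\psi(-x)=\overline{\psi(x)}$, which is (ii), while (i) is part of the hypothesis. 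For (iii), fix $x_1,\dots,x_m$ and $\lambda_1,\dots,\lambda_m$ with $\sum_j\lambda_j=0$ and set $F(t)=\sum_{j,k}e^{-t\psi(x_j-x_k)}\lambda_j\bar\lambda_k$; by hypothesis $F(t)\ge0$ for $t\ge0$ while $F(0)=\bigl|\sum_j\lambda_j\bigr|^2=0$, so the real-analytic function $F$ on $[0,\infty)$ has $F'(0)\ge0$, and since $F'(0)=-\sum_{j,k}\psi(x_j-x_k)\lambda_j\bar\lambda_k$ this is precisely (iii).

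For the converse --- negative-definiteness of $\psi$ implies positive-definiteness of each $e^{-t\psi}$, $t\ge0$ --- I would first reduce to the normalisation $\psi(0)=0$. Subtracting the constant $\psi(0)\ge0$ from $\psi$ preserves (i)--(iii), because in (iii) the constant contributes $\psi(0)\bigl|\sum_j\lambda_j\bigr|^2$, which vanishes when $\sum_j\lambda_j=0$; and $e^{-t\psi}=e^{-t\psi(0)}\,e^{-t(\psi-\psi(0))}$ is positive definite if and only if $e^{-t(\psi-\psi(0))}$ is, since $e^{-t\psi(0)}>0$. So assume $\psi(0)=0$. Fixing $x_1,\dots,x_n\in\mathbb R^d$ and writing $A=\bigl(\psi(x_j-x_k)\bigr)_{j,k=1}^n$, the goal is to show that the entrywise exponential $\bigl(e^{-tA_{jk}}\bigr)_{j,k}$ is positive semidefinite for every $t\ge0$.

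The key step is the claim that the matrix
\begin{equation*}
C=\bigl(\psi(x_j)+\overline{\psi(x_k)}-\psi(x_j-x_k)\bigr)_{j,k=1}^n
\end{equation*}
is positive semidefinite. To prove it, given $c_1,\dots,c_n\in\mathbb C$, apply property (iii) to the $n+1$ points $0,x_1,\dots,x_n$ with coefficients $c_0:=-\sum_{j=1}^nc_j,\,c_1,\dots,c_n$, whose sum is $0$; expanding the resulting inequality $\sum_{j,k=0}^n\psi(x_j-x_k)c_j\bar c_k\le0$, using $\psi(0)=0$ and $\psi(-x)=\overline{\psi(x)}$ for the terms carrying the index $0$, and then substituting $c_0=-\sum_jc_j$, everything collapses to $\sum_{j,k=1}^nC_{jk}c_j\bar c_k\ge0$. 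Once $C$ is known to be positive semidefinite, the Schur product theorem shows that every Hadamard power $C^{\odot m}$ is positive semidefinite, hence so is $\bigl(e^{tC_{jk}}\bigr)_{j,k}=\sum_{m\ge0}\frac{t^m}{m!}C^{\odot m}$ (a sum with nonnegative coefficients for $t\ge0$); also $\bigl(e^{-t\psi(x_j)}\,\overline{e^{-t\psi(x_k)}}\bigr)_{j,k}$ is a rank-one Gram matrix, hence positive semidefinite. Since $C_{jk}-\psi(x_j)-\overline{\psi(x_k)}=-\psi(x_j-x_k)=-A_{jk}$, we have
\begin{equation*}
e^{-tA_{jk}}=e^{\,tC_{jk}}\cdot e^{-t\psi(x_j)}\cdot\overline{e^{-t\psi(x_k)}},
\end{equation*}
so $\bigl(e^{-tA_{jk}}\bigr)_{j,k}$ is a Hadamard product of two positive semidefinite matrices and therefore positive semidefinite. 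As $n$ and the points were arbitrary, $e^{-t\psi}$ is positive definite.

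Finally, the parenthetical continuity assertions: if $\psi$ is continuous then $e^{-t\psi}$ is trivially continuous, and conversely $\psi(x)=\lim_{t\to0^+}t^{-1}\bigl(1-e^{-t\psi(x)}\bigr)$ with the convergence locally uniform (using $\operatorname{Re}\psi\ge\psi(0)\ge0$, which follows from (iii) applied to the two points $0,x$, so that $\psi$ is locally bounded); one may also simply invoke the classical fact that a measurable positive definite function is continuous. The only genuine obstacle in the whole argument is the positive-semidefiniteness of $C$: it amounts to keeping correct track of the augmentation by the extra point $0$, after which the conclusion is pure linear algebra (Schur's product theorem) and one-variable calculus.
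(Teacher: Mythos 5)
The paper itself offers no proof of this statement: it treats Schoenberg's theorem as classical and simply refers to Jacob \cite{J}, so there is no internal argument to compare yours against. Your proposal supplies the standard proof found in such references, and its core is correct and complete. The easy direction (Hermitian symmetry of positive definite functions plus differentiating $F(t)=\sum_{j,k}e^{-t\psi(x_j-x_k)}\lambda_j\bar\lambda_k$ at $t=0$, where $F(0)=|\sum_j\lambda_j|^2=0$) is fine, and the hard direction is the classical one: reduce to $\psi(0)=0$, prove that $C_{jk}=\psi(x_j)+\overline{\psi(x_k)}-\psi(x_j-x_k)$ is positive semidefinite by applying condition (iii) of Definition \ref{d.ndef} to the augmented configuration $0,x_1,\dots,x_n$ with $c_0=-\sum_j c_j$, and then combine Schur's product theorem (applied to the entrywise exponential series of $tC$) with the rank-one Gram factor $e^{-t\psi(x_j)}\overline{e^{-t\psi(x_k)}}$. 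This is exactly the argument the cited literature gives, so the value of your write-up is that it makes the paper's appeal to \cite{J} self-contained.

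One caveat concerns the parenthetical continuity assertions. The ``classical fact'' you invoke — that a measurable positive definite function is continuous — is false as stated (the indicator function of $\{0\}$ is measurable and positive definite but discontinuous), and your locally uniform convergence argument tacitly assumes local boundedness of $\psi$, which $\operatorname{Re}\psi\ge 0$ alone does not provide. The transfer of continuity from the family $e^{-t\psi}$, $t\ge 0$, back to $\psi$ is nevertheless easy to repair: continuity of $|e^{-t\psi}|=e^{-t\operatorname{Re}\psi}$ gives continuity of $\operatorname{Re}\psi$, and if $x_m\to x$ then $e^{-it\operatorname{Im}\psi(x_m)}\to e^{-it\operatorname{Im}\psi(x)}$ for every $t$, which forces $\operatorname{Im}\psi(x_m)\to\operatorname{Im}\psi(x)$ (an unbounded or wrongly convergent subsequence of frequencies would contradict convergence for all $t$, e.g.\ by dominated convergence together with Riemann--Lebesgue). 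Since the continuity statement is parenthetical and the main equivalence is proved correctly, this is a blemish to fix, not a gap in the argument.
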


This theorem is classical. A proof can be found in \cite{J}. In general we refer the reader to \cite{J} for an excellent exposition on the notions of negative definite functions and their relation to convolution semi-groups.

Of course Bochner's classical theorem states that a function $\psi$ is the Fourier transform of a probability measure on $\mathbb R^d$ if and only if it is positive definite on $\mathbb R^d$ and continuous at $0$ with $\psi(0)=1$ . See for example \cite{Sad} or \cite{J} for a proof and details of this classical result.
The following simple lemma allows us to raise negative definite functions which are non negative to ``small'' exponents while remaining in the class of negative definite functions.

\begin{lemma}\label{l.power} Let $\psi:\mathbb R^d\rightarrow [0,\infty)$ be a negative definite function and let $0<\gamma<1$. Then the function $\psi^\gamma$ is also negative definite.
\end{lemma}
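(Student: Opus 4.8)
The plan is to use the standard subordination (Lévy--Khinchin type) formula to write $\psi^\gamma$ as a positive superposition of the negative definite functions $1-e^{-t\psi}$, and then integrate. For $\lambda\ge 0$ and $0<\gamma<1$ one has
$$\lambda^\gamma=c_\gamma\int_0^\infty\bigl(1-e^{-t\lambda}\bigr)\,\frac{dt}{t^{1+\gamma}},\qquad c_\gamma=\Bigl(\int_0^\infty(1-e^{-s})s^{-1-\gamma}\,ds\Bigr)^{-1}\in(0,\infty),$$
which follows at once from the substitution $s=t\lambda$ (and is trivial for $\lambda=0$); the finiteness of $c_\gamma^{-1}$ uses precisely $0<\gamma<1$, since $1-e^{-s}$ behaves like $s$ near $0$ and is $\le 1$ near $\infty$. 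Applying this pointwise with $\lambda=\psi(x)\ge 0$ gives the representation
$$\psi^\gamma(x)=c_\gamma\int_0^\infty\bigl(1-e^{-t\psi(x)}\bigr)\,\frac{dt}{t^{1+\gamma}}.$$
For each fixed $x$ this integral converges absolutely: near $t=0$ one has $0\le 1-e^{-t\psi(x)}\le t\psi(x)$, so the integrand is $O(t^{-\gamma})$ with $\gamma<1$, while near $t=\infty$ the integrand is $\le t^{-1-\gamma}$ with $\gamma>0$.

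Next, fix $t\ge 0$. Since $\psi$ is negative definite with $\psi(0)\ge 0$, Schoenberg's theorem (Theorem~\ref{t.schoen}) shows that $x\mapsto e^{-t\psi(x)}$ is positive definite. I claim that $g_t(x)\coloneqq 1-e^{-t\psi(x)}$ is negative definite. Indeed $g_t(0)=1-e^{-t\psi(0)}\ge 0$, and $g_t$ is real valued and even (because $\psi$, being a nonnegative negative definite function, is real valued and satisfies $\psi(-x)=\overline{\psi(x)}=\psi(x)$), so conditions (i) and (ii) of Definition~\ref{d.ndef} hold. For condition (iii), if $x_1,\dots,x_m\in\mathbb R^d$ and $\lambda_1,\dots,\lambda_m\in\mathbb C$ satisfy $\sum_j\lambda_j=0$, then
$$\sum_{j,k}g_t(x_j-x_k)\lambda_j\bar\lambda_k=\ABS{\sum_j\lambda_j}^2-\sum_{j,k}e^{-t\psi(x_j-x_k)}\lambda_j\bar\lambda_k=-\sum_{j,k}e^{-t\psi(x_j-x_k)}\lambda_j\bar\lambda_k\le 0,$$
the last inequality because $e^{-t\psi}$ is positive definite. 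Thus $g_t$ is negative definite for every $t\ge 0$.

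Finally I integrate. Conditions (i) and (ii) for $\psi^\gamma$ are immediate: $\psi^\gamma(0)=\psi(0)^\gamma\ge 0$ and $\psi^\gamma(-x)=\psi(x)^\gamma=\overline{\psi^\gamma(x)}$. For (iii), let $x_1,\dots,x_m$ and $\lambda_1,\dots,\lambda_m$ with $\sum_j\lambda_j=0$ be given. Each $\psi^\gamma(x_j-x_k)$ is finite by the convergence established above, so the \emph{finite} double sum may be moved inside the integral, giving
$$\sum_{j,k}\psi^\gamma(x_j-x_k)\lambda_j\bar\lambda_k=c_\gamma\int_0^\infty\Bigl(\sum_{j,k}g_t(x_j-x_k)\lambda_j\bar\lambda_k\Bigr)\,\frac{dt}{t^{1+\gamma}}.$$
For every $t>0$ the inner sum is $\le 0$ by the previous paragraph, and $c_\gamma t^{-1-\gamma}>0$, so the right-hand side is $\le 0$. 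Hence $\psi^\gamma$ satisfies all the conditions of Definition~\ref{d.ndef}, i.e.\ it is negative definite. The only point that needs any care is the absolute convergence of the subordination integral and the resulting interchange with the finite sum, which is exactly where the hypothesis $0<\gamma<1$ is used; everything else is bookkeeping.
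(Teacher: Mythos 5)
Your proof is correct and follows essentially the same route as the paper: the subordination identity $\lambda^\gamma=c_\gamma\int_0^\infty(1-e^{-t\lambda})t^{-1-\gamma}\,dt$ (with $c_\gamma=\gamma/\Gamma(1-\gamma)$), Schoenberg's theorem to make $e^{-t\psi}$ positive definite and hence $1-e^{-t\psi}$ negative definite, and integration against the positive measure $t^{-1-\gamma}\,dt$. You merely spell out the convergence and the verification of Definition~\ref{d.ndef} in more detail than the paper does.
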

\begin{proof} The proof relies on the simple identity involving the $\Gamma-$function:
\begin{eqnarray}\label{e.subord}
x^\gamma=\frac{\gamma}{\Gamma(1-\gamma)} \int_0 ^\infty(1-e^{tx})t^{-\gamma-1}dt,
\end{eqnarray}
for all $x\geq0$ and $\gamma\in(0,1)$. Indeed, since $\psi$ is negative definite, Schoenberg's Theorem implies
that $e^{-t\psi(x)}$ is positive definite for all $t\geq 0$ . Using this fact and Definition \ref{d.ndef} one
readily sees that $1-e^{-t\psi(x)}$ is negative definite for all $t>0$. We conlcude that $\int_0 ^\infty
(1-e^{-t\psi(x)})d\mu(t)$ is also negative definite for all positive measures $d\mu$ on $(0,\infty)$ which are
finite away from $0$. Now since $\psi\geq 0$ we can use \eqref{e.subord} to write
\begin{eqnarray*}
\psi^\gamma=\frac{\gamma}{\Gamma(1-\gamma)}  \int_0 ^\infty(1-e^{-t\psi})t^{-\gamma-1}dt,
\end{eqnarray*}
and this concludes the proof since the measure $t^{-\gamma-1}$ is positive and finite away from the origin.
\end{proof}
\begin{remark} What's behind Lemma \ref{l.power} is the fact that the function $s\mapsto s^\gamma$ is a \emph{Bernstein} function for all $\gamma\in(0,1)$ and Bernstein functions are exactly the functions that \emph{operate} on the space of continuous negative definite functions in $\mathbb R^d$.
\end{remark}
Finally, we use the following also classical result about characteristic functions (in the probabilistic sense) of stable distributions. The proof can be found for example in \cite{Ch}.

\begin{theorem}\label{t.char} Let $0<\beta\leq 2$. Then the function $\psi_\beta(t)=e^{-|t|^\beta}$, $t\in \mathbb R$, is the Fourier transform of a probability measure on the real line.
\end{theorem}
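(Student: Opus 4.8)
The plan is to obtain this as a quick corollary of the machinery already in place, namely Bochner's theorem, Schoenberg's theorem (Theorem \ref{t.schoen}), and the power Lemma \ref{l.power}. The key observation is that $e^{-|t|^\beta}=e^{-1\cdot\psi(t)}$ with $\psi(t)=|t|^\beta$, so if $\psi$ is a non-negative (continuous) negative definite function then Schoenberg's theorem makes $e^{-|t|^\beta}$ positive definite; since it is continuous on $\mathbb R$ with value $1$ at the origin, Bochner's theorem then identifies it as the Fourier transform of a probability measure.

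First I would handle the exponent $\beta=2$, i.e.\ show that $\psi_0(t)=t^2$ is negative definite on $\mathbb R$. Here $\psi_0(0)=0\ge 0$, and by (the backward direction of) Schoenberg's theorem it suffices to check that $e^{-s t^2}$ is positive definite for every $s\ge 0$. For $s=0$ this is the constant function $1$, and for $s>0$ one checks directly that $e^{-st^2}$ is the Fourier transform of a suitably normalized Gaussian density; in either case it is positive definite by Bochner. Hence $\psi_0(t)=t^2$ is a non-negative negative definite function.

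Next, for general $0<\beta\le 2$ I would write $\gamma=\beta/2$. If $\gamma=1$ there is nothing more to do. If $0<\gamma<1$, then applying Lemma \ref{l.power} to the non-negative negative definite function $\psi_0(t)=t^2$ shows that $\psi_0^\gamma(t)=(t^2)^{\beta/2}=|t|^\beta$ is negative definite. Either way, $\psi(t)=|t|^\beta$ is a non-negative, continuous negative definite function with $\psi(0)=0$. Schoenberg's theorem then gives that $e^{-t|x|^\beta}$ is positive definite for all $t\ge 0$; taking $t=1$ and invoking Bochner's theorem (the function $e^{-|t|^\beta}$ being continuous on $\mathbb R$ with $e^{-|0|^\beta}=1$) finishes the proof.

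I do not anticipate a genuine obstacle: the whole argument hinges on the single classical input that the Gaussian is positive definite, which seeds the negative definiteness of $t^2$, after which Lemma \ref{l.power} and Schoenberg's theorem do the rest. The only point requiring a moment's care is the bookkeeping in the base case $\beta=2$ together with the reduction $\gamma=\beta/2\in(0,1)$ so that Lemma \ref{l.power} applies. (One could alternatively seed the argument at $\beta=1$ via the Cauchy/Poisson kernel, whose Fourier transform is $e^{-|t|}$, but starting from the Gaussian keeps everything within the results quoted above.)
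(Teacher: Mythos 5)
Your proof is correct, but it is worth noting that the paper does not actually prove Theorem \ref{t.char} at all: it is quoted as a classical fact about symmetric stable distributions with a pointer to Chung's textbook \cite{Ch}. Your argument therefore takes a genuinely different (and more self-contained) route, deriving the theorem from the machinery the paper has already set up. The logic is sound and non-circular: Lemma \ref{l.power} is proved via the subordination identity \eqref{e.subord} and Schoenberg's theorem only, so it may legitimately be applied here; the Gaussian seeds the negative definiteness of $t^2$ via Bochner and the backward direction of Theorem \ref{t.schoen}; Lemma \ref{l.power} with $\gamma=\beta/2\in(0,1)$ then yields that $|t|^{\beta}=(t^2)^{\beta/2}$ is negative definite, and Schoenberg plus Bochner (continuity and the value $1$ at the origin being immediate) finish the job. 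This is precisely the classical subordination proof of the existence of symmetric $\beta$-stable laws, whereas textbook treatments such as Chung's typically go through P\'olya's criterion for $\beta\le 1$ and a separate argument (e.g.\ the L\'evy--Khintchine representation) for $1<\beta<2$. What your approach buys is economy: every ingredient is already stated in the paper, so the citation to \cite{Ch} could in principle be dispensed with. What it costs is nothing of substance; the only point to keep straight is the one you already flag, namely that $\beta=2$ must be treated as the base case since Lemma \ref{l.power} requires $\gamma$ strictly less than $1$.
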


We are now ready to give the proof of Lemma \ref{l.core}.
\begin{proof}[Proof of Lemma \ref{l.core}] For convenience we recall the definition of the metric $\rho$:
\begin{eqnarray*}
\rho(x)=\sum_{0\leq l\leq n-1} \bigg(\sum_{2^{l-1}<j\leq 2^l} |x_j|^\frac{2^l}{j}\bigg)^\frac{1}{2^l}+\bigg(\sum_{2^{n-1}<j\leq d} |x_j|^\frac{2^n}{j}\bigg)^\frac{1}{2^n}.
\end{eqnarray*}
Now we fix some $0<l\leq n$ and consider the function
\begin{eqnarray*}
\psi^{(l)}(\xi_{2^{l-1}+1},\ldots,\xi_{2^l})= \sum_{2^{l-1}<j\leq 2^l} |\xi_j|^\frac{2^l}{j}.
\end{eqnarray*}
If $l=n$ then it is to be understood that the sum extends up to $d$ but this is of no importance. Now by Theorem \ref{t.char}, for every $2^{l-1}<j\leq 2^l$ there is a probability measure $\mu^{(j,l)}$ such that
\begin{eqnarray*}
\widehat{ \mu^{(j,l)} }(\xi_j)=e^{-|\xi_j|^\frac{2^l}{j}}, \ \  2^{l-1}<j\leq 2^l.
\end{eqnarray*}
This is possible since $1\leq \frac{2^l}{j} <2$. Now we write
\begin{eqnarray*}
\mu^{(l)}\coloneqq \mu^{(2^{l-1}+1,l)}\otimes \cdots \otimes \mu^{(2^l,l)}.
\end{eqnarray*}
By the construction it is obvious that $\widehat{\mu^{(l)}}(\xi_{2^{l-1}+1},\ldots,\xi_{2^l})=e^{-\psi^{(l)}(\xi_{2^{l-1}+1},\ldots,\xi_{2^l})}$. By Bochner's theorem we conclude that the function $e^{-\psi^{(l)}}$ is positive definite (and continuous at $0$ but this is obvious).  We also claim that the function $e^{-t\psi^{(l)}}$ is postive definite for all $t>0$. Indeed, if $\epsilon=t^\frac{1}{2^l}$ then
\begin{eqnarray*}
\widehat {\mu^{(l)}_\epsilon}(\xi_{2^{l-1}+1},\ldots,\xi_{2^l})={\widehat{ \mu^{(l)}}}(\delta_\epsilon(\xi_{2^{l-1}+1},\ldots,\xi_{2^l}))=e^{-\epsilon ^{2^l}\psi^{(l)}(\xi_{2^{l-1}+1},\ldots,\xi_{2^l})}=e^{-t\psi^{(l)}(\xi_{2^{l-1}+1},\ldots,\xi_{2^l})},
\end{eqnarray*}
so by appealing to Bochner's theorem again we get the claim. Theorem \ref{t.schoen} now says that the function $\psi^{(l)}$ is negative definite. However, Lemma \ref{l.power} says that the function
\begin{eqnarray*}
\phi^{(l)}(\xi_{2^{l-1}+1},\ldots,\xi_{2^l})= \big(\sum_{2^{l-1}<j\leq 2^l} |\xi_j|^\frac{2^l}{j}\big)^\frac{1}{2^l}
\end{eqnarray*}
is also negative definite (since $\frac{1}{2^l}\leq 1$.) Thus the function $e^{-\phi^{(l)}}$ is positive definite and obviously continuous with value equal to $1$ at zero. We conclude by Bochner's theorem that there exists a probability measure $\nu^{(l)}$ such that $\widehat {\nu ^{(l)}}(\xi_{2^{l-1}+1},\ldots,\xi_{2^l})=e^{-\phi^{(l)}(\xi_{2^{l-1}+1},\ldots,\xi_{2^l})}$. To finish the proof, we define $\nu=\nu^{(1)}\otimes\cdots\otimes\nu^{(n)}$ and check that $\hat\nu(\xi)=e^{-\rho(\xi)}$ for $\xi\in\mathbb R ^d$.
\end{proof}
Several remarks are in order. Firstly one can actually conclude a bit more in Lemma \ref{l.core}. In particular it is not hard to see there is a non negative $L^1$-function $P^\rho$ such that $dP^\rho(x)=P^\rho(x)dx$, that is that the measure $dP^\rho$ has a non negative density. Indeed the function $\widehat {P^\rho}(\xi)=e^{-\rho(\xi)}$ is in $L^1(\mathbb R^d)$. There are several ways to see that. For example we can use Lemma \ref{l.polar} from which we can actually deduce that
\begin{eqnarray*}
\int_{\mathbb R^d}e^{-\rho(\xi)} d\xi = \alpha|\{x\in\mathbb R^d:\rho(x)\leq 1\}|\int_0 ^\infty e^{-r}r^{\alpha-1}dr<\infty.
\end{eqnarray*}
As a result the function
\begin{eqnarray*}
\phi(x)=\int_{\mathbb R^d}e^{-\rho(\xi)}e^{2\pi i \xi\cdot x} d\xi
\end{eqnarray*}
is a well defined function in $C_o(\mathbb R^d)$. On the other hand we have that $\hat \phi (\xi)=e^{-\rho(\xi)}$ in the sense of distributions so the measure $dP^\rho$ must coincide everywhere with a non negative $L^1$-function. We will call $P^\rho$ the \emph{Poisson kernel associated with the parabolic norm $\rho$}.

Another point is that the connection with negative definite functions should have been expected once we set out to find a convolution semi-group. Indeed, convolution semi-groups are in a one to one correspondance with negative definite functions, the correspondance being the one implied by the discussion in this paragraph. Whenever we have a convolution semi-group $T^t(f)=\mu_t*f$, there exists a (uniquely determined) continuous negative definite function $\psi$ such that $\widehat{\mu_t}(\xi)=e^{-t\psi(\xi)} $. Of course the converse is also true. This theme is fully developed in \cite{J}.

The discussion above leads naturally to the following theorem which is just an application of Stein's general
maximal theorem for symmetric diffusion semi-groups. For more details see \cite{S1}. Recall that for a function
$K\in L^1(\mathbb R^d)$ and a parameter $s>0$ we denote by $K_s$ the parabolic dilation of $K$ as defined in
page \pageref{p.pardil}.
\begin{theorem}\label{t.semigroup} Let $\rho$ be the parabolic norm defined in \eqref{e.metric} and let $P^\rho$ its associated Poisson kernel. For $t>0$ we define the family of operators $T^t:L^p(\mathbb R^d)\rightarrow L^p(\mathbb R^d)$, $1\leq p \leq \infty$ as
\begin{eqnarray*}
T^t(f)(x)=f*P^\rho _t(x), f\in L^p(\mathbb R ^d).
\end{eqnarray*}
Then the family $\{T^ t\}_{0<t<\infty}$ is a semi-group of operators since by construction $T^{t_1}\circ T^{t_2}=T^{t_1+t_2}$ for every $t_1,t_2>0$ and $T^0=Id$. We also have that $\lim_{t\rightarrow 0}T^tf =f$ in $L^2(\mathbb R ^d)$. The family $\{T^t\}$ also satisfies the following properties:
\begin{enumerate}
\item[(i)]{$\norm T^t f.L^p(\mathbb R^d).\leq \norm f.L^p(\mathbb R^d).$, $t>0$, $1\leq p \leq \infty$ (contraction property).}
\item[(ii)]{For every $t>0$, $T^t$ is a self adjoint operator in $L^2(\mathbb R ^d)$ (symmetry property).}
\item[(iii)]{$T^t f \geq 0$ if $f\geq 0$, $t>0$ (positivity property).}
\item[(iv)]{$T^t1=1$, $t>0$ (conservation property).}
\end{enumerate}
Thus the family $\{T^t\}$ is a symmetric diffusion semi-group. Let $T^*(f)(x)=\sup_{t>0}T^t(f)(x)$. Then
\begin{eqnarray*}
\norm T^*(f).L^p(\mathbb R^d). \leq c_p \norm f.L^p(\mathbb R ^d). ,\ \  1<p\leq \infty \ , f\in L^p(\mathbb R^d),
\end{eqnarray*}
where $c_p$ depends only on $p$.
\end{theorem}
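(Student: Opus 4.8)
The plan is to check, one axiom at a time, that $\{T^t\}_{t>0}$ is a symmetric diffusion semigroup, and then to quote Stein's general maximal theorem \cite{S1} for the final estimate. The key preliminary observation, which renders everything else routine, is that by Proposition \ref{p.metric}(ii) and the definition of the parabolic dilation of a measure,
$$\widehat{P^\rho_t}(\xi)=\widehat{dP^\rho}(\delta_t\xi)=e^{-\rho(\delta_t\xi)}=e^{-t\rho(\xi)},\qquad t>0,\ \xi\in\mathbb R^d,$$
and that, by the remark following Lemma \ref{l.core}, $dP^\rho=P^\rho(x)\,dx$ with $P^\rho\ge 0$ and $\int_{\mathbb R^d}P^\rho=\widehat{dP^\rho}(0)=1$; thus each $P^\rho_t$ is a probability density on $\mathbb R^d$.

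First I would establish the semigroup law and strong continuity. Since $\widehat{P^\rho_{t_1}}\,\widehat{P^\rho_{t_2}}=e^{-(t_1+t_2)\rho(\xi)}=\widehat{P^\rho_{t_1+t_2}}$, the convolution theorem gives $P^\rho_{t_1}*P^\rho_{t_2}=P^\rho_{t_1+t_2}$, whence $T^{t_1}\circ T^{t_2}=T^{t_1+t_2}$, and $T^0=\mathrm{Id}$ because $e^{-0\cdot\rho}\equiv 1$. For $\lim_{t\to 0}T^tf=f$ in $L^2$ I would use Plancherel together with dominated convergence: $\widehat{T^tf}(\xi)=e^{-t\rho(\xi)}\hat f(\xi)\to\hat f(\xi)$ pointwise as $t\to 0$ and $|e^{-t\rho(\xi)}\hat f(\xi)|\le|\hat f(\xi)|\in L^2(\mathbb R^d)$.

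Next I would verify properties (i)--(iv). Property (i) is Young's inequality, $\|f*P^\rho_t\|_{L^p(\mathbb R^d)}\le\|f\|_{L^p(\mathbb R^d)}\|P^\rho_t\|_{L^1(\mathbb R^d)}=\|f\|_{L^p(\mathbb R^d)}$. Property (ii) follows because the Fourier multiplier $e^{-t\rho(\xi)}$ is real and, by Proposition \ref{p.metric}(iii), $\rho(-\xi)=\rho(\xi)$, so $P^\rho_t$ is real and even and hence $T^t$ is self-adjoint on $L^2(\mathbb R^d)$. Property (iii) is immediate from $P^\rho_t\ge 0$, and property (iv) is $T^t1=\int_{\mathbb R^d}P^\rho_t=1$. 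This exhibits $\{T^t\}$ as a symmetric diffusion semigroup in the sense of Stein, and the bound $\|T^*f\|_{L^p(\mathbb R^d)}\le c_p\|f\|_{L^p(\mathbb R^d)}$ for $1<p\le\infty$ with $c_p$ depending only on $p$ is then a direct quotation of Stein's theorem \cite{S1}; the constant there comes from analytic continuation of $T^t$ to a complex sector and from the Hopf--Dunford--Schwartz maximal ergodic theorem, and in particular does not involve $d$.

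I do not expect a genuine obstacle here: the statement is a verification. The only points requiring some care are confirming that $P^\rho$ is an honest (non-negative, integrable) probability density — which is exactly what the remark after Lemma \ref{l.core} provides — and checking strong $L^2$-continuity, which is needed before Stein's theorem applies. Everything else is a formal consequence of the identity $\widehat{P^\rho_t}(\xi)=e^{-t\rho(\xi)}$ and the elementary properties of $\rho$ collected in Proposition \ref{p.metric}.
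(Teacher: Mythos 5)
Your proof is correct and takes exactly the route the paper intends: the paper offers no written argument beyond remarking that the theorem ``is just an application of Stein's general maximal theorem for symmetric diffusion semi-groups,'' and your verification of the semigroup law, strong $L^2$-continuity, and properties (i)--(iv) from the identity $\widehat{P^\rho_t}(\xi)=e^{-t\rho(\xi)}$ together with the non-negativity and unit mass of $P^\rho$ supplies precisely the routine details the paper leaves implicit before quoting Stein.
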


Theorem \ref{t.semigroup} will be applied as follows. Let $d\sigma$ be the measure defined in \eqref{e.sigma} and $P^\rho$ is the Poisson kernel associated with the parabolic norm $\rho$. We can now estimate our maximal function like in \eqref{e.notoy} where $P^\rho(x)dx$ plays the role of the measure $d\eta$ in \eqref{e.notoy}:
\begin{eqnarray*}
\mathcal M ^d _\sigma(x) \leq \sup_{s>0}(f*P^\rho _s)(x)+\big(\sum_{k\in \mathbb Z} |f*(d\sigma-dP^\rho)_{2^k}(x)|^2\big)^\frac{1}{2}.
\end{eqnarray*}
By theorem \ref{t.semigroup} the first term in the above estimate is bounded in $L^2$ (in fact in all $L^p$, $1<p\leq\infty$) with bounds that do not depend on the dimension $d$. We write $d\nu=d\sigma-dP^\rho$ and we define the square function
\begin{eqnarray}\label{e.square}
S(f)(x)=\big(\sum_{k\in \mathbb Z} |(f*d\nu_{2^k})(x)|^2\big)^\frac{1}{2}.
\end{eqnarray}
Theorem \ref{t.main} now reduces to the following:

\begin{theorem}\label{t.square} There exists an absolute constant $c>0$ such that
\begin{eqnarray*}\norm S(f).L^2(\mathbb R ^d).\leq c \log d \ \norm f.L^2(\mathbb R ^d)..
\end{eqnarray*}
\end{theorem}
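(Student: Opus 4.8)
The plan is to pass to the Fourier side. Since $S$ acts diagonally in frequency, Plancherel gives
\begin{eqnarray*}
\norm S(f).L^2(\mathbb R^d).^2=\int_{\mathbb R^d}\abs{\hat f(\xi)}^2\Big(\sum_{k\in\mathbb Z}\abs{\widehat{d\nu}(\delta_{2^k}\xi)}^2\Big)\,d\xi ,
\end{eqnarray*}
where by Lemma~\ref{l.core} the multiplier is $\widehat{d\nu}(\xi)=\hat\sigma(\xi)-e^{-\rho(\xi)}$, with $\hat\sigma$ as in \eqref{e.sigma}. Hence it is enough to prove the pointwise bound
\begin{eqnarray*}
\sum_{k\in\mathbb Z}\abs{\widehat{d\nu}(\delta_{2^k}\xi)}^2\leq(c\log d)^2 ,\qquad \xi\in\mathbb R^d .
\end{eqnarray*}
Since $\rho(\delta_{2^k}\xi)=2^k\rho(\xi)$ by Proposition~\ref{p.metric}(ii), and the orbit $\{\delta_{2^k}\xi:k\in\mathbb Z\}$ is unchanged when $\xi$ is replaced by $\delta_{2^m}\xi$, I may normalise $\xi$ so that $\tfrac12<\rho(\xi)\leq1$, and then split the sum into a low-frequency part $k\leq0$ and a high-frequency part $k\geq1$.

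In the low-frequency part $\rho(\delta_{2^k}\xi)\leq1$ and everything is elementary and dimension-free. From $\hat\sigma(0)=1$ and $\abs{e^{iu}-1}\leq\abs u$ one gets $\abs{\hat\sigma(\eta)-1}\leq2\pi\sum_{j=1}^d\abs{\eta_j}$, and likewise $\abs{e^{-\rho(\eta)}-1}\leq\rho(\eta)$. The definition \eqref{e.metric} forces $\abs{\eta_j}\leq\rho(\eta)^j$ for every $j$, so for $\rho(\eta)\leq\tfrac12$ we have $\sum_{j=1}^d\abs{\eta_j}\leq\sum_{j\geq1}\rho(\eta)^j\lesssim\rho(\eta)$ with an absolute constant. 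Hence $\abs{\widehat{d\nu}(\eta)}\lesssim\rho(\eta)$ on $\{\rho\leq\tfrac12\}$, uniformly in $d$, and so $\sum_{k\leq0}\abs{\widehat{d\nu}(\delta_{2^k}\xi)}^2\lesssim\sum_{k\leq0}(2^k\rho(\xi))^2\lesssim1$.

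The high-frequency part is where $\log d$ is generated. There $e^{-\rho(\delta_{2^k}\xi)}=e^{-2^k\rho(\xi)}\leq e^{-2^{k-1}}$ is trivially square-summable, so the whole matter reduces to the decay of $\hat\sigma$ along the lacunary parabolic orbit, i.e.\ to showing $\sum_{k\geq1}\abs{\hat\sigma(\delta_{2^k}\xi)}^2\lesssim(\log d)^2$. I would obtain this from a van der Corput type oscillatory integral estimate for $\hat\sigma(\eta)=\int_{\frac12<\abs t\leq1}e^{-2\pi i(\eta_1t+\cdots+\eta_d t^d)}\,dt$, giving power decay of $\abs{\hat\sigma(\eta)}$ adapted to the anisotropy of $\rho$; this is exactly what the otherwise unmotivated shape of \eqref{e.metric} is designed for. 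Writing $\rho=\sum_{l=0}^{n}\rho_l$ for its $n+1\approx\log_2 d$ summands (the $l$-th one being formed from the block $\{2^{l-1}<j\leq2^l\}$ with exponents $2^l/j$), each $\rho_l$ is homogeneous of degree one under $\delta_s$, so $\rho_l(\delta_{2^k}\xi)=2^k\rho_l(\xi)$; the normalisation $\rho(\xi)>\tfrac12$ forces at least one block to carry mass $\rho_{l_0}(\xi)\gtrsim1/\log d$, and the oscillatory estimate turns this into genuine decay of $\hat\sigma(\delta_{2^k}\xi)$ once $k$ is large. Summing the resulting decay in $\ell^2$ over $k\geq1$ --- with the derivative orders that enter within a block of size $\sim2^l$ controlled by the block size, and the bookkeeping over which block is dominant costing at most a further logarithmic factor --- yields $\sum_{k\geq1}\abs{\hat\sigma(\delta_{2^k}\xi)}^2\lesssim(\log d)^2$.

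Combining the two ranges gives $\sum_k\abs{\widehat{d\nu}(\delta_{2^k}\xi)}^2\lesssim(\log d)^2$, hence Theorem~\ref{t.square}. The main obstacle is the high-frequency step: proving the oscillatory integral estimate for the moment curve $(t,\ldots,t^d)$, with constants whose $d$-dependence is mild, and matching it precisely to $\rho$ so that the $\ell^2$ sum over dyadic scales loses only $(\log d)^2$ instead of a power of $d$. The Plancherel reduction and the low-frequency estimate, by contrast, are routine.
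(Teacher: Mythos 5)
Your Plancherel reduction, the dyadic normalisation $\rho(\xi)\in(\tfrac12,1]$, and the low-frequency estimate (via $|\eta_j|\le\rho(\eta)^j$) are correct and dimension-free, as is the disposal of the Poisson term for $k\ge1$. The gap is exactly where you place it, in the claim $\sum_{k\ge1}|\hat\sigma(\delta_{2^k}\xi)|^2\lesssim(\log d)^2$, and the mechanism you sketch does not close it. The only oscillatory estimate available with a dimension-free constant for the full degree-$d$ phase is the Vinogradov-type bound of Lemma \ref{l.oscillatory}, $|\hat\sigma(\eta)|\lesssim(\max_j|\eta_j|)^{-1/d}$; along the orbit this gives $|\hat\sigma(\delta_{2^k}\xi)|\lesssim(|\xi_j|2^{kj})^{-1/d}$ for a dominant index $j$, and the resulting geometric series $\sum_{k\ge1}2^{-2kj/d}$ has sum of order $d/j$. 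So if the block carrying the mass $\gtrsim 1/\log d$ is a low block (say $j\le 2$, all higher coordinates zero or negligible but nonzero), your argument produces $O(d)$ rather than $O((\log d)^2)$. Replacing Vinogradov by a van der Corput estimate of order $j$ to get decay $2^{-k}$ reintroduces constants growing with $j$ (hence with $d$) and requires controlling the interference of the lower-order coefficients. In short, ``one block is large, hence decay, hence square-summability'' is essentially a restatement of the theorem; this is the step where all the work lies, and your outline leaves it open.

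The paper closes this gap with an induction on the $n=\log_2 d$ dyadic blocks of coordinates rather than a single split at $k=0$. Setting $c_n=\sup_{\xi}\lVert\widehat{\nu_{2^k}}(\xi)\rVert_{\ell^2(\mathbb Z)}$ for $d=2^n$, one proves $c_n\le c_{n-1}+c$. The threshold is block-adapted: with $A^{-1}=\max_{2^{n-1}<j\le 2^n}|\xi_j|^{1/j}=|\xi_{j_0}|^{1/j_0}$ determined by the top block only, the Vinogradov bound is invoked only for $2^k>A$, where the dominant index satisfies $j_0>d/2$, so the geometric ratio $2^{-j_0/d}\le 2^{-1/2}$ and the tail sums to an absolute constant. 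For $2^k\le A$ the top-block coordinates contribute $O(1)$ in $\ell^2$ (the difference $\widehat{\sigma_{2^k}}(\xi)-\widehat{\sigma_{2^k}}(y)$ is bounded by $2^{kj_1}|\xi_{j_1}|$ and $A^{j_1}|\xi_{j_1}|\le1$), and what remains is the same quantity for $y=(\xi_1,\dots,\xi_{2^{n-1}})$, i.e.\ $c_{n-1}$. Each of the $\log_2 d$ levels costs an additive absolute constant, which is precisely where the $\log d$ comes from. To repair your proof, replace the single low/high split by this telescoping over blocks.
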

The proof of this theorem is the content of the following section.

\section{The square function estimate.}\label{s.square}
This section is devoted to proving the inequality
\begin{eqnarray*}
\norm S(f).L^2(\mathbb R^d).\leq c\log d \ \norm f.L^2(\mathbb R^d). ,
\end{eqnarray*}
where $c>0$ is an absolute constant. Remember that $S(f)$ is the square function defined in \eqref{e.square} with respect to the measure $d\nu=d\sigma-dP^\rho $. Using Plancherel's theorem we get:
\begin{eqnarray*}
\norm \mathcal S(f).L^2(\mathbb R^d).&=& \NOrm \bigg(\sum_{k\in \mathbb Z}|f*d\nu_{2^k}|^2\bigg)^\frac{1}{2}.L^2(\mathbb R^d). =\bigg( \sum_{k\in \mathbb Z}\int_{\mathbb R ^d}|(f*d\nu_{2^k})(x)|^2 dx\bigg) ^\frac{1}{2}\\
&=&  \bigg( \sum_{k\in \mathbb Z}\int_{\mathbb R ^d}|\hat f(\xi)|^2|\widehat{\nu_{2^k}}(\xi)|^2 d\xi\bigg) ^\frac{1}{2}\\ &\leq& \sup_{\xi\in\mathbb R ^d} \norm \widehat {\nu_{2^k}}(\xi).\ell^2(\mathbb Z). \norm f .L^2(\mathbb R^d). .
\end{eqnarray*}
Here of course we denote $\norm \widehat {\nu_{2^k}}(\xi).\ell^2(\mathbb Z).=\big(\sum_{k\in \mathbb Z}|\nu_{2^k}(\xi)|^2\big)^\frac{1}{2}$. So in order to prove the theorem it is enough to prove that
\begin{eqnarray*}
\sup_{\xi\in\mathbb R ^d} \norm \widehat {\nu_{2^k}}(\xi).\ell^2(\mathbb Z). \leq c \log d, \ \ d>1.
\end{eqnarray*}

Estimating $|\widehat {\nu_{2^k}}(\xi)|$ for large $\xi$ amounts to estimating suitable oscillatory integrals. We will digress a bit now to state a lemma that is the appropriate estimate for the proof.

\subsection{Oscillatory integral estimates}
We will need to estimate an oscillatory integral of the form
\begin{eqnarray*}
\int_a ^b e^{ip(t)}dt,
\end{eqnarray*}
where $P(t)=b_1t+b_2t^2+\cdots+b_dt^d$ is a real polynomial of degree (at most) $d$ with zero constant term. As is well known oscillatory integral estimates are in a sense equivalent to sublevel set estimates. We have the following lemma due to Vinogradov \cite{V}:

\begin{lemma}\label{l.vinogradov} Let $p(t)=b_0+b_1 t+\cdots+b_d t^d$ be a real polynomial of degree d. Then,
\begin{eqnarray*}
|\{t\in[a,b]:|p(t)|\leq \delta\}| \lesssim \max(|a|,|b|)\bigg(\frac{\delta}{\max_{0\leq k \leq d} |b_k|}\bigg)^\frac{1}{d}.
\end{eqnarray*}
\end{lemma}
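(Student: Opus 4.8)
The plan is to prove Lemma~\ref{l.vinogradov} by first establishing the case $[a,b]=[-1,1]$ (equivalently, an interval of unit length near the origin) and then recovering the general case by a rescaling argument. Throughout, write $B=\max_{0\le k\le d}|b_k|$ and observe that by homogeneity of the statement under $p\mapsto p/B$ we may assume $B=1$, so that some coefficient $b_{k_0}$ has $|b_{k_0}|=1$ and all others have modulus $\le 1$.

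\textbf{Step 1: reduction to a normalized interval.} Suppose first that $\max(|a|,|b|)\le 1$, so $[a,b]\subset[-1,1]$. The key point is the following quantitative nondegeneracy fact: if $p$ has degree $d$ and $B=1$, then $p$ cannot be uniformly small on all of $[-1,1]$. More precisely, I claim there is a constant $c_d'>0$ (ultimately to be taken $c_d'\sim c^{-d}$ for an absolute $c$, via properties of the Chebyshev basis / Markov brothers-type bounds, or via a compactness argument on the sphere $\{B=1\}$ in coefficient space) such that $\sup_{t\in[-1,1]}|p(t)|\ge c_d'$. Given this, a standard argument shows that the sublevel set $E_\delta=\{t\in[-1,1]:|p(t)|\le\delta\}$ has $|E_\delta|\lesssim \delta^{1/d}$: one uses that $p^{(d)}\equiv d!\,b_d$ is constant and, on each maximal interval where a successive derivative does not vanish, applies the van der Corput / Gronwall-type inequality that a polynomial whose $j$-th derivative is bounded below by $\lambda$ in absolute value has a sublevel set of measure $\lesssim (\delta/\lambda)^{1/j}$. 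Iterating from $j=d$ down to $j=1$, and summing the geometric contributions, gives $|E_\delta|\lesssim (\delta/c_d')^{1/d}\lesssim \delta^{1/d}$ with the implicit constant absorbing the $d$-dependence, which is harmless here since the lemma allows $\lesssim$ to hide constants depending on $d$.

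\textbf{Step 2: rescaling to the general interval.} For a general $[a,b]$, set $R=\max(|a|,|b|)$ and change variables $t=Rs$, $s\in[-1,1]\supset[a/R,b/R]$. Then $p(Rs)=\sum_{k=0}^d b_k R^k s^k=:\tilde p(s)$, a polynomial of degree $\le d$ with coefficients $\tilde b_k=b_k R^k$. Since $|E_\delta(p)\cap[a,b]| = R\,|\{s\in[a/R,b/R]:|\tilde p(s)|\le\delta\}| \le R\,|\{s\in[-1,1]:|\tilde p(s)|\le\delta\}|$, Step~1 applied to $\tilde p$ yields $|E_\delta(p)|\lesssim R\,(\delta/\tilde B)^{1/d}$ where $\tilde B=\max_k|b_kR^k|$. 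The final step is to compare $\tilde B$ with $B$: when $R\ge 1$ we have $\tilde B=\max_k|b_k|R^k\ge \max_k|b_k|=B$, so $(\delta/\tilde B)^{1/d}\le(\delta/B)^{1/d}$ and we are done; when $R\le1$ we already did this case directly in Step~1 (there $\tilde B\le B$ is the wrong direction, but then $[a,b]\subset[-1,1]$ and Step~1 gives the bound with $R=1$ on the right, which is $\le\max(|a|,|b|)\cdot(\text{const})$ only if we are slightly more careful — so in the $R\le1$ regime I instead keep track that $|E_\delta|\lesssim\delta^{1/d}$ directly, noting $\delta^{1/d}=(\delta/B)^{1/d}$ since $B=1$, and $\max(|a|,|b|)\le1$ makes the stated right-hand side $\ge$ a constant multiple of this only when $R$ is bounded below; in fact one reduces to $R$ comparable to $1$ by first splitting $[a,b]$ and noting the interesting case is $0\notin[a,b]$, handled by another rescaling centering at the closer endpoint). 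These bookkeeping issues are routine.

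\textbf{Main obstacle.} The heart of the matter is Step~1's nondegeneracy estimate $\sup_{[-1,1]}|p|\gtrsim_d 1$ when $\max_k|b_k|=1$, together with the iterated sublevel-set bound. The cleanest route is: (a) a compactness argument giving $\sup_{[-1,1]}|p|\ge c_d'>0$ on the compact set $\{\max_k|b_k|=1\}$ (the sup is continuous and strictly positive there, being zero only for $p\equiv0$); (b) the inductive van der Corput-type lemma stating that if $|p^{(j)}(t)|\ge\lambda$ throughout an interval $I$ then $|\{t\in I:|p(t)|\le\delta\}|\le C_j(\delta/\lambda)^{1/j}$ — proved by noting $p$ is monotone on $O(1)$ subintervals when $j=1$, and inducting on $j$ by splitting $I$ at the point where $|p^{(j-1)}|$ is smallest. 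Assembling (a) and (b): since $\max_k|b_k|=1$, some coefficient forces some derivative $p^{(j)}$ to have a large-ish value somewhere, and hence to be bounded below on a fixed-length subinterval; running the induction downward controls the full sublevel set. I expect writing (b) carefully and tracking the (irrelevant but present) $d$-dependence of the constants to be the only real work; everything else is rescaling.
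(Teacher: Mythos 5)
The paper does not actually prove this lemma --- it is quoted from Vinogradov \cite{V} --- so I am judging your argument on its own merits, and as written it has a genuine gap. The gap is your parenthetical claim that ``the lemma allows $\lesssim$ to hide constants depending on $d$.'' It does not: the paper's notational conventions state explicitly that $\lesssim$ never suppresses $d$-dependent constants, and this is essential, since Lemma \ref{l.vinogradov} feeds into Lemma \ref{l.oscillatory} and then into the bound $I\lesssim 1$ in the proof of Theorem \ref{t.square}; a constant growing with $d$ here would destroy the $\log d$ bound that is the entire point of the paper. Your two main tools both fail this requirement. The compactness argument in Step 1 gives a constant $c_d'>0$ with no quantitative control whatsoever (and even the Chebyshev-basis bound $c_d'\gtrsim c^{-d}$, which is fine because $(c_d')^{-1/d}$ is then absolute, only fixes half the problem). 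The iterated van der Corput sublevel-set lemma in (b) is the real obstruction: the induction $|E|\le 2\mu+2C_{j-1}(\delta/\mu)^{1/(j-1)}$ yields, after optimizing in $\mu$, a recursion of the form $C_j=4\,C_{j-1}^{(j-1)/j}$, hence $C_d=e^{O(d)}$. This constant sits \emph{outside} the $d$-th root and is not absorbed by it, so your method proves the estimate only with an exponentially large prefactor.

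The classical proof avoids derivatives entirely and gets absolute constants because every $d$-dependent quantity ends up inside a $d$-th root. If $E=\{t\in[a,b]:|p(t)|\le\delta\}$ has measure $m$, choose points $t_0<\cdots<t_d$ in $E$ with $t_{i+1}-t_i\ge m/d$ (take $t_i$ so that $|E\cap(-\infty,t_i]|=im/d$). Lagrange interpolation at these nodes recovers each coefficient $b_k$ as a linear combination of the values $p(t_i)$ with weights controlled by $\prod_{j\ne i}|t_i-t_j|^{-1}\le \big(i!\,(d-i)!\big)^{-1}(d/m)^d$ and by elementary symmetric functions of the nodes, which are at most $2^d\max(|a|,|b|)^{d-k}$. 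This gives $\max_k|b_k|\le \delta\,(Cd)^d (d!)^{-1}\max(1,\max(|a|,|b|))^{d}m^{-d}$, and taking $d$-th roots turns $(Cd)^d/d!$ into an absolute constant by Stirling, yielding exactly the stated bound. I would also not wave off the Step 2 bookkeeping for $R=\max(|a|,|b|)\le 1$: in that regime your rescaling comparison of $\tilde B$ with $B$ goes the wrong way, and the case where the maximum is attained by $b_0$ with $[a,b]$ near the origin is precisely where one needs the interpolation structure rather than a generic sublevel-set bound.
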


Now the estimate for the corresponding oscillatory integral is trivial. Indeed, let $\delta >0$ and write
\begin{eqnarray*}
\ABs{\int_a ^b e^{ip(t)}dt}&\leq& |\{t\in[a,b]:|p^\prime(t)|<\delta\}| +\ABs{\int_{\{t\in[a,b]:|p^\prime(t)|\geq\delta\}} e^{ip(t)}dt}\lesssim
\\ &\lesssim&\max\{|a|,|b|\}\bigg(\frac{\delta}{\max_{1\leq k \leq d} |b_k|}\bigg)^\frac{1}{d-1}+\frac{d}{\delta}.
\end{eqnarray*}
The first term comes from Lemma \ref{l.vinogradov}. For the second term note that $p^\prime$ changes
monotonicity at most $d-1$ times in $[a,b]$. Thus we can split the set $\{t\in[a,b]:|p^\prime(t)|\geq\delta\}$
into $\mathcal O(d)$ intervals where $p^\prime$ is monotonic and integrate by parts. Optimizing in $\delta$
gives the following lemma:

\begin{lemma}\label{l.oscillatory} Let $p(t)=b_1 t+\cdots+b_d t^d$ where $b_j\in\mathbb R$ for $j=1,\ldots,d$. Then for all $a,b\in \mathbb R$ with $a<b$ we have
\begin{eqnarray*}
\ABs{\int_a ^b e^{ip(t)}dt} \lesssim \frac{\max(|a|,|b|)^{1-\frac{1}{d}}}{(\max_{1\leq k \leq d}
|b_k|)^\frac{1}{d}}.
\end{eqnarray*}
\end{lemma}

We now have all the ingredients to complete the proof of Theorem \ref{t.square} and thus that of Theorem \ref{t.main}.
\begin{proof}[Proof of Theorem \ref{t.square}] Remember that the square function estimate of the theorem will be established if we show:
\begin{eqnarray}\label{e.nuhats}
\sup_{\xi\in\mathbb R ^d} \norm \widehat {\nu_{2^k}}(\xi).\ell^2(\mathbb Z). \leq c \log d, \ \ d>1.
\end{eqnarray}
A moment's reflection will now allow us to assume that $d=2^n$ for some $n\in \mathbb N$, thus simplifying a bit the argument that follows. Indeed, suppose that we have proved \eqref{e.nuhats} with $2^n$ in the place of $d$:
\begin{eqnarray}\label{e.induction}
\sup_{\xi\in\mathbb R ^{2^n}} \norm \widehat{ {\tilde \nu }_{2^k}}(\xi).\ell^2(\mathbb Z). \leq c \log 2^n,
\end{eqnarray}
where $\tilde\nu$ is defined in the obvious way on $\mathbb R^{2^n}$. For general $d$, suppose that $2^{n-1}<d\leq 2^n$ and for $\xi\in\mathbb R^d$, consider $\bar\xi\in \mathbb R^{2^n}$ where $\bar \xi_j= \xi_j$ for $1\leq j \leq d$ and $\bar \xi_j= 0$ for $d+1\leq j \leq 2^n$. Now clearly,
\begin{eqnarray*}
\sup_{\xi\in\mathbb R ^d} \norm \widehat { \nu _{2^k}}(\xi).\ell^2(\mathbb Z). = \sup_{\xi\in\mathbb R ^d} \norm \widehat{\tilde \nu _{2^k}}(\bar\xi).\ell^2(\mathbb Z). \leq \sup_{\xi\in\mathbb R ^{2^n}} \norm \widehat{ {\tilde {\nu} }_{2^k}}(\xi).\ell^2(\mathbb Z). \leq c \log 2^n \lesssim \log d,
\end{eqnarray*}
which proves the claim. We can and will therefore assume that $d=2^n$.

For $n=0,1,2,\ldots,$ we define
\begin{eqnarray*}
c_n\coloneqq \sup_{\xi\in\mathbb R^{2^n}}\norm \widehat {\nu _{2^k}}(\xi).\ell^2(\mathbb Z). .
\end{eqnarray*}
We will prove that $c_n\leq c_{n-1}+c$ for all $n\geq 1$, using induction on $n$. Clearly this proves \eqref{e.induction} and thus \eqref{e.nuhats}.

For $n=0$ and $\xi_1\in\mathbb R$ we have
\begin{eqnarray*}
\widehat{\nu_{2^k}}(\xi_1)=\int_{\frac{1}{2}\leq|t|<1}e^{-2\pi i \xi_1 2^k t}dt - e^{-2^k \rho(\xi_1)}
\end{eqnarray*}
Now on the one hand
\begin{eqnarray*}
\ABs{\int_{\frac{1}{2}\leq|t|<1}e^{-2\pi i \xi_1 2^k t}dt }\lesssim \frac{1}{2^k|\xi_1|},
\end{eqnarray*}
and
\begin{eqnarray*}
e^{-2^k \rho (\xi_1)}=e^{-2^k|\xi_1|}\leq \frac{1}{2^k |\xi_1|}.
\end{eqnarray*}
On the other hand
\begin{eqnarray*}
\ABs{\int_{\frac{1}{2}\leq|t|<1}e^{-2\pi i \xi_1 2^k t}dt - e^{-2^k \rho(\xi_1)}}&\leq&
\ABs{\int_{\frac{1}{2}\leq|t|<1}e^{-2\pi i \xi_1 2^k t}dt-1}+\Abs{ e^{-2^k \rho(\xi_1)}-1}\\
&\lesssim&  2^k |\xi_1|.
\end{eqnarray*}
Since we have the estimate
\begin{eqnarray*}
\widehat{\nu_{2^k}}(\xi_1)\lesssim \min \bigg(2^k|\xi_1|,\frac{1}{2^k|\xi_1|}\bigg),
\end{eqnarray*}
it is now trivial to estimate the $\ell^2$ norm as follows:
\begin{eqnarray*}
\norm \widehat {\nu _{2^k}}(\xi_1).\ell^2(\mathbb Z). \leq \norm \widehat {\nu _{2^k}}(\xi_1).\ell^2(2^k>|\xi_1|^{-1}).+\norm \widehat {\nu _{2^k}}(\xi_1).\ell^2(2^k\leq|\xi_1|^{-1}).\lesssim 1,
\end{eqnarray*}
for all $\xi_1\in\mathbb R$. So the first step of the induction works.

We now take $n\geq 1$ and $\xi\in\mathbb R ^{2^n}$ and we define $y\in\mathbb R^{2^{n-1}}$ as $y=(\xi_1,\ldots,\xi_{2^{n-1}})$. We also define $2^{n-1}<j_o\leq 2^n$ such that $|\xi_{j_o}|^\frac{1}{j_o}= \max_{2^{n-1}<j\leq 2^n}|\xi_l|^\frac{1}{l}\eqqcolon A^{-1}$. We have
\begin{eqnarray*}
\norm \widehat {\nu _{2^k}}(\xi).\ell^2(\mathbb Z). &\leq& \norm \widehat {\nu _{2^k}}(\xi).\ell^2(2^k>A).+\norm \widehat {\nu _{2^k}}(\xi)-\widehat {\nu _{2^k}}(y).\ell^2(2^k\leq A).+ \norm \widehat {\nu _{2^k}}(y).\ell^2(2^k\leq A).\\ &\leq& \norm \widehat {\nu _{2^k}}(\xi).\ell^2(2^k>A).+\norm \widehat {\nu _{2^k}}(\xi)-\widehat{ \nu _{2^k}}(y).\ell^2(2^k\leq A).+c_{n-1}\\
&\eqqcolon&I+II+c_{n-1}.
\end{eqnarray*}
We estimate $I$ by $I\leq \norm  \widehat{\sigma_{2^k}}(\xi).\ell^2({2^k>A}). +\Norm \widehat{ P^{\rho} _{2^k}} (\xi) .\ell^2({2^k>A}).$. Using Lemma \ref{l.oscillatory} and writing $2^n=d$ we have
\begin{eqnarray*}
|\widehat{\sigma_{2^k}}(\xi)|=\ABs{\int_{\frac{1}{2}\leq|t|<1}e^{-2\pi i (\xi_1 2^k t+\cdots+\xi_d 2^{kd}t^d)}dt }\lesssim \frac{1}{\big(\max_{1\leq j \leq d}|\xi_j2^{kj}|\big)^\frac{1}{d}}\leq \frac{1}{|\xi_{j_o}|^\frac{1}{d}2^\frac{kj_o}{d}}.
\end{eqnarray*}

For the ``Poisson semigroup'' $\widehat {P^\rho}$ we have
\begin{eqnarray*}
\widehat {P^{\rho} _{2^k}}(\xi)=e^{-2^k \rho(\xi)}\leq \frac{1}{2^k \rho(\xi)}\leq
\frac{1}{2^k|\xi_{j_o}|^\frac{1}{j_o}}.
\end{eqnarray*}
Summing up these estimates we get
\begin{eqnarray*}
I=\norm \widehat {\nu _{2^k}}(\xi).\ell^2(2^k>A).\lesssim \Bigg(\sum_{2^k>A}
\frac{1}{|\xi_{j_o}|^\frac{2}{d}2^\frac{2kj_o}{d}}\Bigg)^\frac{1}{2}+
\Bigg(\sum_{2^k>A}\frac{1}{2^{2k}|\xi_{j_o}|^\frac{2}{j_o}}\Bigg)^\frac{1}{2}\lesssim
\frac{1}{1-2^{-\frac{j_o}{d}}}\lesssim \frac{d}{j_o} \lesssim 1.
\end{eqnarray*}

We continue with the estimate for $II$. Here we write
\begin{eqnarray*}
\norm \widehat {\nu _{2^k}}(\xi)-\widehat {\nu _{2^k}}(y).\ell^2(2^k\leq A).\leq \norm \widehat {\sigma _{2^k}}(\xi)-\widehat {\sigma _{2^k}}(y).\ell^2(2^k\leq A).+\Norm \widehat {P^{\rho} _{2^k}}(\xi)-\widehat {P^{\rho} _{2^k}}(y).\ell^2(2^k\leq A). .
\end{eqnarray*}
We estimate the two summands separately. We have
\begin{eqnarray*}
\Abs{\widehat {\sigma _{2^k}}(\xi)-\widehat {\sigma _{2^k}}(y)}&\leq& \int_{\frac{1}{2}\leq |t|<1}\ABs {e^{-2\pi i (2^k\xi_1t+\cdots+\xi_d 2^{kd}t^d)}-e^{-2\pi i (2^k\xi_1t+\cdots+\xi_\frac{d}{2}2^{k\frac{d}{2}} t^\frac{d}{2})}} dt\\
&\lesssim& \sum_{j=2^{n-1}} ^{2^n} \frac{2^{kj}|\xi_j|}{j+1} \lesssim  2^{kj_1}|\xi_{j_1}|,
\end{eqnarray*}
for some $2^{n-1}<j_1\leq 2^n$. Summing in $k$ in $\{2^k \leq A\}$ we thus get
\begin{eqnarray*}
\Norm \widehat {\sigma _{2^k}}(\xi)-\widehat{ \sigma _{2^k}}(y).\ell^2(2^k\leq A).&\leq& |\xi_{j_1}|\ \bigg( \sum_{2^k \leq A} 2^{2kj_1}\bigg)^\frac{1}{2} \lesssim\frac{1}{1-2^{-j_1}} A^{j_1} |\xi_{j_1}| \lesssim 1.
\end{eqnarray*}

Finally using the definition of the metric $\rho$ we can write
\begin{eqnarray*}
\Abs{\widehat {P^{\rho} _{2^k}}(\xi)-\widehat {P^{\rho} _{2^k}}(y) }  &=&\Abs{e^{-2^k\rho(\xi)}-e^{-2^k\rho(y)}}\leq 2^k |\rho(\xi)-\rho(y)|\\
&=& 2^k \bigg( \sum_{2^{l-1}<j\leq 2^l} |\xi_j|^\frac{2^l}{j}  \bigg)^\frac{1}{2^l}\leq 2^k |\xi_{j_o}|^\frac{1}{j_o} (2^l)^\frac{1}{2^l}\lesssim 2^k |\xi_{j_o}|^\frac{1}{j_o}.
\end{eqnarray*}
As a result
\begin{eqnarray*}
\Norm \widehat {P^{\rho} _{2^k}}(\xi)-\widehat {P^{\rho} _{2^k}}(y).\ell^2(2^k\leq A). \lesssim \bigg( \sum_{2^k\leq A}  2^{2k} |\xi_{j_o}|^\frac{2}{j_o}\bigg)^\frac{1}{2}\lesssim 1.
\end{eqnarray*}
Thus $II\lesssim 1$ and so $c_n\leq c_{n-1}+c$ for all $n\geq 1$ which completes the proof.
\end{proof}

\begin{bibsection}

\begin{biblist}
\bib{B}{article}{
   author={Bourgain, Jean},
   title={On high-dimensional maximal functions associated to convex bodies},
   journal={Amer. J. Math.},
   volume={108},
   date={1986},
   number={6},
   pages={1467--1476},
   issn={0002-9327},
   review={\MR{868898 (88h:42020)}},
}
\bib{B1}{article}{
   author={Bourgain, Jean},
   title={On the $L\sp p$-bounds for maximal functions associated to convex
   bodies in ${\bf R}\sp n$},
   journal={Israel J. Math.},
   volume={54},
   date={1986},
   number={3},
   pages={257--265},
   issn={0021-2172},
   review={\MR{853451 (87j:42055)}},
}
\bib{Car}{article}{
   author={Carbery, Anthony},
   title={An almost-orthogonality principle with applications to maximal
   functions associated to convex bodies},
   journal={Bull. Amer. Math. Soc. (N.S.)},
   volume={14},
   date={1986},
   number={2},
   pages={269--273},
   issn={0273-0979},
   review={\MR{828824 (87k:42015)}},
}
\bib{C}{article}{
   author={Christ, Michael},
   title={Weak type $(1,1)$ bounds for rough operators},
   journal={Ann. of Math. (2)},
   volume={128},
   date={1988},
   number={1},
   pages={19--42},
   issn={0003-486X},
   review={\MR{951506 (89m:42013)}},
}

\bib{CS}{article}{
   author={Christ, Michael},
   author={Stein, Elias M.},
   title={A remark on singular Calder\'on-Zygmund theory},
   journal={Proc. Amer. Math. Soc.},
   volume={99},
   date={1987},
   number={1},
   pages={71--75},
   issn={0002-9939},
   review={\MR{866432 (88c:42030)}},
}

\bib{Ch}{book}{
   author={Chung, Kai Lai},
   title={A course in probability theory},
   edition={3},
   publisher={Academic Press Inc.},
   place={San Diego, CA},
   date={2001},
   pages={xviii+419},
   isbn={0-12-174151-6},
   review={\MR{1796326 (2001g:60001)}},
}

\bib{CW}{book}{
   author={Coifman, Ronald R.},
   author={Weiss, Guido},
   title={Analyse harmonique non-commutative sur certains espaces
   homog\`enes},
   language={French},
   series={Lecture Notes in Mathematics, Vol. 242},
   note={\'Etude de certaines int\'egrales singuli\`eres},
   publisher={Springer-Verlag},
   place={Berlin},
   date={1971},
   pages={v+160},
   review={\MR{0499948 (58 \#17690)}},
}
\bib{DR}{article}{
   author={Duoandikoetxea, Javier},
   author={Rubio de Francia, Jos{\'e} L.},
   title={Maximal and singular integral operators via Fourier transform
   estimates},
   journal={Invent. Math.},
   volume={84},
   date={1986},
   number={3},
   pages={541--561},
   issn={0020-9910},
   review={\MR{837527 (87f:42046)}},
}

\bib{J}{book}{
   author={Jacob, Neil},
   title={Pseudo differential operators and Markov processes. Vol. I},
   note={Fourier analysis and semigroups},
   publisher={Imperial College Press},
   place={London},
   date={2001},
   pages={xxii+493},
   isbn={1-86094-293-8},
   review={\MR{1873235 (2003a:47104)}},
}

\bib{SF}{book}{
   author={Folland, Gerald   B.},
   author={Stein, Elias M.},
   title={Hardy spaces on homogeneous groups},
   series={Mathematical Notes},
   volume={28},
   publisher={Princeton University Press},
   place={Princeton, N.J.},
   date={1982},
   pages={xii+285},
   isbn={0-691-08310-X},
   review={\MR{657581 (84h:43027)}},
}

\bib{Sad}{book}{
   author={Sadosky, Cora},
   title={Interpolation of operators and singular integrals},
   series={Monographs and Textbooks in Pure and Applied Math.},
   volume={53},
   note={An introduction to harmonic analysis},
   publisher={Marcel Dekker Inc.},
   place={New York},
   date={1979},
   pages={xii+375},
   isbn={0-8247-6883-3},
   review={\MR{551747 (81d:42001)}},
}

\bib{ST}{article}{
   author={Seeger, Andreas},
   author={Tao, Terence},
   title={Sharp Lorentz space estimates for rough operators},
   journal={Math. Ann.},
   volume={320},
   date={2001},
   number={2},
   pages={381--415},
   issn={0025-5831},
   review={\MR{1839769 (2002d:42018)}},
}

\bib{S}{book}{
   author={Stein, Elias M.},
   title={Harmonic analysis: real-variable methods, orthogonality, and
   oscillatory integrals},
   series={Princeton Mathematical Series},
   volume={43},
   note={With the assistance of Timothy S. Murphy;
   Monographs in Harmonic Analysis, III},
   publisher={Princeton University Press},
   place={Princeton, NJ},
   date={1993},
   pages={xiv+695},
   isbn={0-691-03216-5},
   review={\MR{1232192 (95c:42002)}},
}

\bib{S1}{book}{
   author={Stein, Elias M.},
   title={Topics in harmonic analysis related to the Littlewood-Paley
   theory. },
   series={Annals of Mathematics Studies, No. 63},
   publisher={Princeton University Press},
   place={Princeton, N.J.},
   date={1970},
   pages={viii+146},
   review={\MR{0252961 (40 \#6176)}},
}
\bib{S2}{article}{
   author={Stein, Elias M.},
   title={Boundary behavior of harmonic functions on symmetric spaces:
   maximal estimates for Poisson integrals},
   journal={Invent. Math.},
   volume={74},
   date={1983},
   number={1},
   pages={63--83},
   issn={0020-9910},
   review={\MR{722726 (85j:22020)}},
}
\bib{SS}{article}{
   author={Stein, Elias M.},
   author={Str{\"o}mberg, Jan-Olov},
   title={Behavior of maximal functions in ${\bf R}\sp{n}$ for large $n$},
   journal={Ark. Mat.},
   volume={21},
   date={1983},
   number={2},
   pages={259--269},
   issn={0004-2080},
   review={\MR{727348 (86a:42027)}},
}

\bib{SW}{article}{
   author={Stein, Elias M.},
   author={Wainger, Stephen},
   title={Problems in harmonic analysis related to curvature},
   journal={Bull. Amer. Math. Soc.},
   volume={84},
   date={1978},
   number={6},
   pages={1239--1295},
   issn={0002-9904},
   review={\MR{508453 (80k:42023)}},
}

\bib{V}{book}{
   author={Vinogradov, Ivan Matveevi{\v{c}}},
   title={Selected works},
   note={With a biography by K. K. Mardzhanishvili;
   Translated from the Russian by Naidu Psv [P. S. V. Naidu];
   Translation edited by Yu.\ A. Bakhturin},
   publisher={Springer-Verlag},
   place={Berlin},
   date={1985},
   pages={viii+401},
   isbn={3-540-12788-7},
   review={\MR{807530 (87a:01042)}},
}

  \end{biblist}
 \end{bibsection}
 \end{document}